\begin{document}
\numberwithin{equation}{section}

\def\1#1{\overline{#1}}
\def\2#1{\widetilde{#1}}
\def\3#1{\widehat{#1}}
\def\4#1{\mathbb{#1}}
\def\5#1{\frak{#1}}
\def\6#1{{\mathcal{#1}}}

\newcommand{\de}{\partial}
\newcommand{\R}{\mathbb R}
\newcommand{\al}{\alpha}
\newcommand{\tr}{\widetilde{\rho}}
\newcommand{\tz}{\widetilde{\zeta}}
\newcommand{\tv}{\widetilde{\varphi}}
\newcommand{\tO}{\widetilde{\Omega}}
\newcommand{\hv}{\hat{\varphi}}
\newcommand{\tu}{\tilde{u}}
\newcommand{\usc}{{\sf usc}}
\newcommand{\tF}{\tilde{F}}
\newcommand{\debar}{\overline{\de}}
\newcommand{\Z}{\mathbb Z}
\newcommand{\C}{\mathbb C}
\newcommand{\Po}{\mathbb P}
\newcommand{\zbar}{\overline{z}}
\newcommand{\G}{\mathcal{G}}
\newcommand{\So}{\mathcal{U}}
\newcommand{\Ko}{\mathcal{K}}
\newcommand{\U}{\mathcal{U}}
\newcommand{\B}{\mathbb B}
\newcommand{\oB}{\overline{\mathbb B}}
\newcommand{\Cur}{\mathcal D}
\newcommand{\Dis}{\mathcal Dis}
\newcommand{\Levi}{\mathcal L}
\newcommand{\SP}{\mathcal SP}
\newcommand{\Sp}{\mathcal Q}
\newcommand{\Ma}{\mathcal M}
\newcommand{\Co}{\mathcal C}
\newcommand{\Hol}{{\sf Hol}(\mathbb H, \mathbb C)}
\newcommand{\Aut}{{\sf Aut}(\mathbb D)}
\newcommand{\D}{\mathbb D}
\newcommand{\oD}{\overline{\mathbb D}}
\newcommand{\oX}{\overline{X}}
\newcommand{\loc}{L^1_{\rm{loc}}}
\newcommand{\loci}{L^\infty_{\rm{loc}}}
\newcommand{\la}{\langle}
\newcommand{\ra}{\rangle}
\newcommand{\thh}{\tilde{h}}
\newcommand{\N}{\mathbb N}
\newcommand{\kd}{\kappa_D}
\newcommand{\Hr}{\mathbb H}
\newcommand{\ps}{{\sf Psh}}
\newcommand{\tg}{\widetilde{\gamma}}

\newcommand{\subh}{{\sf subh}}
\newcommand{\harm}{{\sf harm}}
\newcommand{\ph}{{\sf Ph}}
\newcommand{\tl}{\tilde{\lambda}}
\newcommand{\ts}{\tilde{\sigma}}

\def\v{\varphi}
\def\Re{{\sf Re}\,}
\def\Im{{\sf Im}\,}

\def\e{{\sf e}}

\def\au{{\underline a}}

\def\dist{{\rm dist}}
\def\const{{\rm const}}
\def\rk{{\rm rank\,}}
\def\id{{\sf id}}
\def\aut{{\sf aut}}
\def\Aut{{\sf Aut}}
\def\CR{{\rm CR}}
\def\GL{{\sf GL}}
\def\U{{\sf U}}

\def\la{\langle}
\def\ra{\rangle}
\def\Ha{\mathbb H}

\newtheorem{theorem}{Theorem}[section]
\newtheorem{lemma}[theorem]{Lemma}
\newtheorem{proposition}[theorem]{Proposition}
\newtheorem{corollary}[theorem]{Corollary}

\theoremstyle{definition}
\newtheorem{definition}[theorem]{Definition}
\newtheorem{example}[theorem]{Example}

\theoremstyle{remark}
\newtheorem{remark}[theorem]{Remark}
\numberwithin{equation}{section}

\makeatletter
\@namedef{subjclassname@2020}{\textup{2020} Mathematics Subject Classification}
\makeatother

\title[Invariant subspaces for finite index shifts]{Invariant subspaces for finite index shifts in Hardy spaces}
\author[F. Bracci]{Filippo Bracci}
\author[E. A. Gallardo-Guti\'errez]{Eva A. Gallardo-Guti\'errez}

\address{F. Bracci\newline
Dipartimento Di Matematica\newline
Universit\`{a} di Roma \textquotedblleft Tor Vergata\textquotedblright\ \newline
Via Della Ricerca Scientifica 1, 00133 \newline
Roma, Italy}
\email{fbracci@mat.uniroma2.it}
\thanks{The first author is partially supported by PRIN (2022) Real and Complex Manifolds: Geometry and holomorphic dynamics Ref:2022AP8HZ9, by GNSAGA of INdAM and  by the MUR Excellence Department Project 2023-2027 MatMod@Tov CUP:E83C23000330006 awarded to the Department of Mathematics, University of Rome Tor Vergata}

\address{E. A. Gallardo-Guti\'errez \newline Departamento de An\'alisis Matem\'atico y\newline
Matem\'atica Aplicada \newline
Facultad de CC. Matem\'aticas,
\newline Universidad Complutense de
Madrid, \newline
 Plaza de Ciencias 3, 28040 Madrid,  Spain
 \newline
and Instituto de Ciencias Matem\'aticas ICMAT(CSIC-UAM-UC3M-UCM),
\newline Madrid,  Spain }
\email{eva.gallardo@mat.ucm.es}
\thanks{Second author is partially supported by Plan Nacional  I+D grant no. PID2022-137294NB-I00, Spain.
\newline
The authors also acknowledge support from ``Severo Ochoa Programme for Centres of Excellence in R\&D'' (CEX2019-000904-S \& CEX2023-
001347) of the
Ministry of Economy and Competitiveness of Spain and by the European Regional Development
Fund and from the Spanish
National Research Council, through the ``Ayuda extraordinaria a
Centros de Excelencia Severo Ochoa'' (20205CEX001).
}

\subjclass[2020]{47B37; 47A15; 30J05}
\keywords{Beurling theorem; shift operator; invariant subspaces in Hilbert spaces; operators with finite defect; inner functions}

\begin{abstract}
Let $\mathbb H$ be the finite direct sums of $H^2(\mathbb D)$. In this paper, we give a characterization of  the  closed subspaces of $\mathbb H$ which are invariant under the shift, thus obtaining a concrete Beurling-type theorem for the finite index shift. This characterization presents any such a subspace as the finite intersection, up to an inner function, of pre-images of a closed shift-invariant subspace of $H^2(\mathbb D)$ under  ``determinantal operators'' from $\mathbb H$ to  $H^2(\mathbb D)$, that is, continuous linear operators which intertwine the shifts and appear as determinants of matrices with entries given by bounded holomorphic functions. With simple algebraic manipulations we provide a direct proof that every invariant closed subspace of codimension at least two sits into a non-trivial closed invariant subspace. As a consequence every contraction with finite defect has a nontrivial closed  invariant subspace.
\end{abstract}

\maketitle

\tableofcontents

\section{Introduction}

Let $H^2(\D)$ be the classical Hardy space on the unit disc $\D$. Beurling's theorem \cite{Beu, Beu2} states that every closed subspace of $H^2(\D)$  invariant under the shift  are of the form $\phi H^2(\D)$, where $\phi$ is either identically $0$ or an inner function.

Beurling's Theorem is a cornerstone in the theory of function spaces. It has been extended to broader classes of operators, with significant advancements achieved through the development of sophisticated tools in related fields such as Harmonic Analysis, Function Theory, and finite-dimensional Linear Algebra within the framework of Operator Theory (see, {\sl e.g.}, \cite{Duren, CiRo, RoRo, ARR, ARS, Ri, RR, Ni, Ni2}).

The importance of understanding the lattice of invariant subspaces for the shift operators, besides its intrinsic interest, lies in the universality property of their adjoints (see, {\sl e.g.}, \cite[Section~1.5]{RoRo}). Specifically, every continuous endomorphism of defect $\delta$ in a Hilbert space can be modeled (up to a constant) through the restriction to an invariant subspace of any backward shift of index at least $\delta$.
From Beurling's theorem, it is straightforward to understand the lattice of invariant subspaces of the shift of index one. Via the universality property, this provides an affirmative solution to the invariant subspace problem for bounded linear operators with defect $\leq 1$.

Beurling's Theorem can be viewed as stating that every non-trivial closed invariant subspace for the shift in $H^2(\D)$ is the image of an isometry which commutes with the shift. From this perspective, Lax \cite{Lax} (for finite index shifts), along with Halmos \cite{Halmos} and Rovnyak \cite{Rov} (for infinite index shifts), extended Beurling's theorem to what is now known as the Beurling-Lax Theorem (see also \cite[Section~1.12]{RoRo}). They proved that every closed subspace of a Hilbert space, invariant under a shift, is the image of a quasi-isometry that commutes with the shift.

While the Beurling-Lax Theorem fully characterizes closed invariant subspaces for the shift in any Hilbert space, its generality and abstraction can pose challenges when working with specific cases.

For instance, directly from this theorem, it seems difficult to obtain information about maximal invariant subspaces for the shift, even for finite index shifts. In fact, by a result of Atzmon \cite{Atz}, it is known that maximal invariant subspaces for finite index shifts have codimension one---this also follows from Guo, Hei and Hou \cite{Guo}, who proved a similar statement for the restriction of the multiplication on the Bergman space to a finite index invariant subspace, and hence the result follows by the ``universality'' of the lattice of invariant subspaces of such an operator \cite[Corollary~3.4]{ABFP}. However those proofs are non constructive and could be very difficult to generalize to infinite index shifts.

In this paper, we present a novel approach to describing closed invariant subspaces for the finite index shift on the direct sum of Hardy spaces. This approach proves to be ``effective' in the sense that, starting from this characterization, we can easily demonstrate through simple algebraic manipulations that the only maximal invariant subspaces have codimension one.

In order to state our main result, we need to introduce some notations (see Section~\ref{Sec:deter-operator} for details and precise statements). Let $d$ be a positive integer and let $\Ha$ be the Hilbert space given by the direct sum of $d$ copies of the Hardy space $H^2(\D)$. The elements of $\Ha$ are $d$-tuple of functions in $H^2(\D)$, and we write $F\in\Ha$ if $F(z)=f_1(z)\e_1+\ldots+f_d(z)\e_d$, $z\in \D$ and $f_j\in H^2(\D)$. Let $S:\Ha \to \Ha$ be the {\sl shift operator}, defined as $S(F)(z)=zF(z)=zf_1(z)\e_1+\ldots+zf_d(z)\e_d$, $z\in \D$. We write $S_1:=S$  if $d=1$.

The basic observation is that if $R: \mathbb H\to H^2(\D)$ is a bounded linear operator such that $R\circ S=S_1\circ R$ and $\phi$ is a inner function, then $M:=R^{-1}(\phi H^2(\D))$ is an $S$-invariant closed subspace of $\mathbb H$. Our main result is that every closed $S$-invariant subspace of $\mathbb H$ are, up to an inner function, finite intersections of subspaces like $M$. From this perspective, our approach also makes sense  for $d=\infty$. Indeed, the basic question is whether the same conclusion holds in such a case (possibly with infinitely many intersections). In fact, we also specify the form of the operators $R$, which turns out to be what we call ``determinantal operators'', and we now describe them in detail.

Let $1\leq m\leq d$ be an integer number and let $A=(a_{jk})$ be a $m\times m$ matrix whose entries are bounded holomorphic functions in $\D$ (that is, elements of $H^\infty(\D)$). Let $1\leq s_1<\ldots <s_m \leq d$. Let $j\in \{1,\ldots, m\}$. A {\sl determinantal operator} is any linear operator $L:\Ha \to H^2(\D)$ of the form
\[
L(f_1\e_1+\ldots+f_d\e_d):=\det \left(\begin{matrix} a_{11} & \ldots &  a_{1m}\\ \vdots & \vdots & \vdots\\ a_{j-1,1} & \ldots &  a_{j-1,m}\\ f_{s_1} & \ldots & f_{s_m} \\ a_{j+1,1} & \ldots &  a_{j+1,m}\\ \vdots & \vdots &  \vdots\\  a_{m1} & \ldots &  a_{mm}\\  \end{matrix} \right).
\]
Let $V\subseteq H^2(\D)$ be a closed $S_1$-invariant subspace. Hence, there exists $\varphi$ which is either identically zero or a inner function, such that $V=\varphi H^2(\D)$. A {\sl determinantal brick $Q_\varphi$ based on $\varphi$} is any set of the form
\[
Q_\varphi:=L^{-1}(\varphi H^2(\D))=\{F\in \Ha: L(F)\in \varphi H^2(\D)\},
\]
where $L$ is a determinantal operator. The set $Q_\varphi$ is a closed $S$-invariant subspace of $\Ha$. A {\sl determinantal space $\mathcal Q_\varphi$ based on $\varphi$} is the intersection of a finite number of determinantal bricks based on $\varphi$, that is,
\[
\mathcal Q_\varphi=Q^1_\varphi\cap \ldots \cap Q^n_\varphi,
\]
where  $Q^j_\varphi$ is a determinantal brick based on $\varphi$, $j=1,\ldots, n$. With these definitions at hand, the main result of the paper can then be stated as follows:

\begin{theorem}\label{Thm:main-intro}
Let $N\neq\{0\}$ be a closed subspace of $\Ha$. Then $N$ is $S$-invariant if and only if there exist inner functions $\varphi, \phi$ and determinantal subspaces $\mathcal Q_\varphi$ and $\mathcal Q_0$ such that either
\[
N=\phi \left( \mathcal Q_\varphi\cap \mathcal Q_0 \right),
\]
or
\[
N=\phi  \mathcal Q_\varphi.
\]
\end{theorem}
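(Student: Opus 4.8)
The plan is to prove the two directions separately, with the ``if'' direction being essentially a bookkeeping exercise and the ``only if'' direction carrying all the weight. For the ``if'' direction, one observes that each determinantal operator $L:\Ha\to H^2(\D)$ intertwines $S$ and $S_1$ (since multiplication by $z$ pulls out of a determinant that is linear in the row containing the $f_{s_k}$'s), so $L^{-1}(\varphi H^2(\D))$ is closed and $S$-invariant; finite intersections of such preserve this, and multiplication by the inner function $\phi$ is an isometry commuting with $S$, hence carries $S$-invariant closed subspaces to $S$-invariant closed subspaces. So $\phi(\mathcal Q_\varphi\cap\mathcal Q_0)$ and $\phi\mathcal Q_\varphi$ are always $S$-invariant and closed.

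For the ``only if'' direction, let $N\neq\{0\}$ be a closed $S$-invariant subspace of $\Ha$. The first step is to extract the inner factor $\phi$: following the classical scalar argument, one looks at the ``greatest common inner divisor'' of all components appearing across all elements of $N$, or equivalently writes $N=\phi N'$ where $N'$ is $S$-invariant, closed, and has the property that the components of its elements have no common inner divisor. (Concretely, one considers the $H^2(\D)$-valued functions obtained by projecting $N$ onto coordinates and takes the inner part common to the whole family; this requires knowing such a common inner factor exists, which is the standard Beurling-type normalization.) After this reduction it suffices to show that any $S$-invariant closed $N'$ whose elements have coprime components is a determinantal space, possibly intersected with one more piece $\mathcal Q_0$ based on the zero subspace.

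The heart of the argument, which I expect to be the main obstacle, is to produce the determinantal operators themselves. The natural approach is via the Beurling--Lax theorem: $N'$ is the range of a quasi-isometry $\Theta$ commuting with $S$, i.e. multiplication by a matrix-valued bounded holomorphic function $\Theta(z)$ of some size $d\times r$. Membership $F\in N'$ should then be encoded by the vanishing (modulo $\varphi H^2$, or modulo a principal submodule) of the maximal minors obtained by adjoining $F$ as a row to suitable $(m-1)\times(m-1)$ submatrices of $\Theta$ — this is the algebraic content of ``$F$ lies in the column span of $\Theta$,'' a Cramer/Fitting-ideal statement. The subtlety is that $\Theta$ is only a quasi-isometry (not an isometry), so its matrix need not have full rank pointwise and the minors need not be literally zero but only inner multiples of $\varphi$; handling the rank drop is exactly where the extra factor $\mathcal Q_0$ (a determinantal space based on the zero inner function, i.e. with $V=\{0\}$) enters, absorbing the locus where minors must vanish identically. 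One then checks that finitely many such minor conditions cut out $N'$ exactly — an upper bound on the number of bricks $n$ coming from $d$ (the number of $m\times m$ minors one can form, $m\le d$) — by a dimension/rank count comparing the codimension of $N'$ with the codimension of the intersection of bricks.

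Finally, one must verify that the $L$ so constructed are genuine determinantal operators in the precise sense of the definition: the non-$F$ rows of each determinant must be $H^\infty(\D)$ functions, which holds because they are entries of the bounded holomorphic matrix $\Theta$, and the operator must land in $H^2(\D)$, which holds since a determinant expansion along the $F$-row exhibits $L(F)$ as an $H^\infty(\D)$-linear combination of the $H^2(\D)$ components of $F$. Assembling these, $N'=\mathcal Q_\varphi$ or $N'=\mathcal Q_\varphi\cap\mathcal Q_0$, and hence $N=\phi N'$ has the claimed form. I expect the genuinely delicate points to be (i) the normalization producing $\phi$ when the components do have a common inner factor, and (ii) showing the minor conditions are not merely necessary but sufficient — i.e. that an $F$ satisfying all of them really is in the range of $\Theta$ — which is where one must invoke the structure theory (Smith/Fitting) of the matrix $\Theta(z)$ over the ring $H^\infty(\D)$ together with the coprimeness normalization.
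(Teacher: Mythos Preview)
Your high-level architecture matches the paper exactly: Beurling--Lax gives a bounded matrix $A\in\mathcal M_\infty(d\times d)$ whose columns generate $N$; the inner factor $\phi$ is the greatest common inner divisor of the entries of $A$ (not of all elements of $N$, though this comes to the same thing); and the determinantal operators are built from minors of the reduced matrix $\hat A=A/\phi$, with the two cases in the statement corresponding to $\det\hat A\not\equiv 0$ versus $\det\hat A\equiv 0$. The ``if'' direction and the necessity of the minor conditions go through just as you say.

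The genuine gap is your sufficiency argument. A ``dimension/rank count comparing codimensions'' cannot work: in the $\det A\equiv 0$ case the codimension of $N$ is always infinite (indeed $\mathcal R(A)$ already has infinite codimension), so there is nothing finite to count. And invoking Smith/Fitting structure over $H^\infty(\D)$ is a dead end --- $H^\infty(\D)$ is not a Bezout domain, so matrices over it do not admit a Smith normal form in general. What the paper actually does for the reverse inclusion is a direct Cramer computation: if $F$ satisfies the minor conditions, the adjugate identity $\hat A\cdot\mathrm{adj}(\hat A)=(\det\hat A)\,\mathrm{Id}$ shows that $O\cdot F$ lies in $W=\overline{\mathrm{span}}\{S^n\hat a_j\}$, where $O$ is the \emph{outer} part of $\det\hat A$. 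The crucial lemma you are missing is that multiplication by a bounded outer function fixes every closed $S$-invariant subspace: $\overline{OW}=W$ and $\overline{OZ}=Z$, whence $Z\subseteq W$. In the rank-deficient case an additional ingredient is needed: the inner functions $\theta_S=\varphi_S/\varphi_{J,A}$ (as $S$ ranges over maximal nonvanishing minors) are coprime, and one uses the density of $\sum_S \theta_S H^2(\D)$ in $H^2(\D)$ to pass from $\theta_S F\in W$ for each $S$ to $F\in W$. Neither of these two analytic facts --- outer-multiplication invariance and coprime-inner approximation --- appears in your sketch, and they are precisely what replaces the unavailable algebraic structure theory.
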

In the previous statement, it might happen that either $\mathcal Q_\varphi=\Ha$ or $\mathcal Q_0=\Ha$ (or both, in case $N=\Ha$). Note also that, in case $d=1$, the previous theorem reduces to the classical Beurling's Theorem. Indeed, in that case, the only determinantal operator is the identity operator $L(f_1\e_1)=f_1$ and hence the determinantal bricks are of the form $Q_\varphi=\varphi H^2(\D)$ and $Q_0=\{0\}$.

Theorem~\ref{Thm:main-intro} follows at once from Theorem~\ref{Thm:subspace-includL} (where a more precise statement is given). The starting point of the proof is  the Beurling-Lax theorem (see Section~\ref{Sec:Beurling-Lax}): any closed $S$-invariant subspace of $\Ha$ is the image of a so-called $S$-inner operator of $\Ha$. Such an operator is actually given by the a $d\times d$ matrix with entries in $H^\infty(\D)$. This is the matrix from which we define the determinantal subspaces associated to a closed $S$-invariant subspace. The two different cases in the theorem depends on whether the determinant of such a matrix is identically zero or not.

The concrete form given by Theorem~\ref{Thm:main-intro} allows us to prove directly the following result (cfr. \cite{Atz}):

\begin{theorem}\label{Thm:no-maximal-intro}
Let $N\subsetneq \Ha$ be a closed $S$-invariant subspace. Suppose that $\dim N^\perp\geq 2$. Then there exists a closed $S$-invariant subspace $M\subsetneq \Ha$ such that $N\subsetneq M$.
\end{theorem}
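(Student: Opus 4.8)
The plan is to use the concrete description of $N$ supplied by Theorem~\ref{Thm:main-intro} and produce $M$ by enlarging the data that define $N$. Write $N=\phi(\mathcal Q_\varphi\cap\mathcal Q_0)$ or $N=\phi\mathcal Q_\varphi$. Since multiplication by the inner function $\phi$ is an isometry of $\Ha$ commuting with $S$ and carrying $S$-invariant subspaces to $S$-invariant subspaces of the same codimension, it suffices to treat the case $\phi\equiv 1$, i.e.\ $N=\mathcal Q_\varphi\cap\mathcal Q_0$ (absorbing the possibility $\mathcal Q_0=\Ha$). Thus $N=Q^1_\varphi\cap\cdots\cap Q^n_\varphi\cap Q^1_0\cap\cdots\cap Q^k_0$ is a finite intersection of determinantal bricks, each of the form $L^{-1}(\varphi H^2)$ or $L^{-1}(\{0\})$ for a determinantal operator $L$. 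The first key step is that any single brick $Q=L^{-1}(\psi H^2(\D))$ (with $\psi$ inner or $\psi\equiv 0$) is itself $S$-invariant, and more importantly that it is \emph{proper with infinite-codimension complement unless $L\equiv 0$}: concretely, I expect $\Ha/Q\cong L(\Ha)/(L(\Ha)\cap\psi H^2(\D))$, and since $L(\Ha)$ is an $S_1$-invariant (hence either $\{0\}$ or dense-type, actually of the form $\theta H^2$) subspace of $H^2(\D)$, this quotient is either $\{0\}$ or infinite-dimensional. Hence a proper finite-codimension $N$ cannot be cut out by the bricks alone in a ``wasteful'' way — the finite codimension must come from the intersection structure, and this is what I will exploit.

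The heart of the argument is the following: since $\dim N^\perp\ge 2$ while each brick has either zero or infinite codimension, $N$ must be a \emph{proper} intersection, i.e.\ removing one brick strictly enlarges the subspace. Concretely I would argue that among the defining bricks $Q^1,\dots,Q^r$ (relabelling all of them, both the $\varphi$-type and the $0$-type, as $Q^1,\dots,Q^r$) one may assume the intersection is irredundant, $N=Q^1\cap\cdots\cap Q^r$ with $r\ge 1$ and $Q^1\cap\cdots\cap\widehat{Q^i}\cap\cdots\cap Q^r\supsetneq N$ for every $i$. Set $M:=Q^2\cap\cdots\cap Q^r$ (or $M:=\Ha$ if $r=1$, which forces $N$ to be a single brick). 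Then $M$ is a closed $S$-invariant subspace with $N\subsetneq M$ by irredundancy. It remains only to check $M\subsetneq\Ha$. If $r\ge 2$ this is automatic unless some $Q^i=\Ha$, which we have excluded by taking the intersection irredundant (a brick equal to $\Ha$ may be dropped). If $r=1$, then $N=Q^1=L^{-1}(\psi H^2(\D))$ is a single brick with $2\le\dim N^\perp<\infty$; by the codimension dichotomy of the previous paragraph this case is impossible, so in fact $r\ge 2$ always, completing the proof.

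The main obstacle I anticipate is the codimension dichotomy for a single determinantal brick $Q=L^{-1}(\psi H^2(\D))$: one must show $\Ha/Q$ is either trivial or infinite-dimensional. For this I would analyze the range $L(\Ha)\subseteq H^2(\D)$, which is an $S_1$-invariant linear subspace; its closure is $\theta H^2(\D)$ for some inner or zero $\theta$ by Beurling. Then $Q=L^{-1}(\psi H^2(\D))$ has codimension governed by the relative position of $\overline{L(\Ha)}=\theta H^2(\D)$ and $\psi H^2(\D)$ inside $H^2(\D)$, together with the fact that $L$ is surjective onto its (dense-in-$\theta H^2$) range; when $L\not\equiv 0$, the quotient of $H^2(\D)$ by $\theta H^2\cap\psi H^2=(\operatorname{lcm}(\theta,\psi))H^2$ restricted appropriately is infinite-dimensional (an inner function is never a unit, so $\psi H^2$ has infinite codimension whenever $\psi\not\equiv1$, and if $\psi\equiv 1$ then $Q$ is dense and the dichotomy degenerates to $Q=\Ha$ or similar). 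A delicate point is that $L(\Ha)$ need not be closed, so one should phrase everything in terms of $Q=\ker(\pi\circ L)$ where $\pi:H^2\to H^2/\psi H^2$ is the quotient map, and observe $\dim\Ha/Q=\dim\operatorname{ran}(\pi\circ L)\le\dim(H^2/\psi H^2)$, which is infinite unless $\psi\equiv 1$; a separate easy argument handles $\psi\equiv1$ (then $Q=\Ha$) and $L\equiv 0$ (then $Q=\Ha$ if $0\in\psi H^2$, always true). I expect this bookkeeping, rather than any deep idea, to be where the care is needed.
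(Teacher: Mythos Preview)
Your plan contains a genuine error: the ``codimension dichotomy'' for a single determinantal brick is false. You assert that $\dim(H^2(\D)/\psi H^2(\D))$ is infinite unless $\psi\equiv 1$, but in fact $\dim(H^2(\D)/\psi H^2(\D))=n$ whenever $\psi$ is a finite Blaschke product of degree $n$. Concretely, take $d=2$, let $L(f_1\e_1+f_2\e_2)=f_1$ (a $1\times 1$ determinantal operator with $J=\{1\}$), and $\psi(z)=z^2$. Then the brick $Q=L^{-1}(z^2H^2(\D))=z^2H^2(\D)\e_1\oplus H^2(\D)\e_2$ has codimension exactly $2$. This is precisely an $N$ with $\dim N^\perp=2$, irredundantly presented with $r=1$, so your argument breaks down. (And indeed, if you run the paper's structure theorem on this $N$ you get the Beurling--Lax matrix $A=\mathrm{diag}(z^2,1)$ with $\phi_A=1$, $\varphi_A=z^2$, and after discarding the trivial brick the representation is a single brick.) Your reduction to $\phi\equiv 1$ is also not quite right---multiplication by a non-invertible inner $\phi$ is an isometry onto $\phi\Ha\subsetneq\Ha$, so it does \emph{not} preserve codimension in $\Ha$---but that is easily patched and is not the main obstacle.

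The paper's proof follows a different route. It does not try to drop bricks; instead it uses the precise matrix description $N=\mathcal N_A$ (when $\det A\not\equiv 0$) or $N=\mathcal N_{A,J}$ (when $\det A\equiv 0$) coming from the Beurling--Lax matrix $A$, and constructs $M$ by explicit algebraic manipulations of $A$. In the degenerate case $\det A\equiv 0$ (Proposition~\ref{Prop:NA-no-maximal}) one enlarges the rank of the matrix by adjoining a row. In the nondegenerate case (Proposition~\ref{Prop:NA-no-maximal2}) one first strips off $\phi_A$, then isolates a single brick $L_{\hat A,d}^{-1}(\varphi_A H^2(\D))$ and replaces $\varphi_A$ by a strictly larger inner divisor---which is exactly the move you would need to handle the $r=1$ case above. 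The delicate situation in which this replacement is unavailable is precisely when all the relevant inner factors $\theta_j$ are degree-one Blaschke products; there the paper observes that $Q^\perp$ is finite-dimensional and $S^\ast$-invariant, and finishes with elementary linear algebra on the finite-dimensional space $Q^\perp$. Your proposal misses this last phenomenon entirely.
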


This theorem follows immediately from Proposition~\ref{Prop:NA-no-maximal}, Proposition~\ref{Prop:NA-no-maximal2} and Theorem~\ref{Thm:subspace-includL}.

As we stated in the introduction, Theorem~\ref{Thm:no-maximal-intro}  allows to give an affirmative answer to the invariant subspace problem for contractions with finite defect (for an account history and circle of ideas related to the subspace invariant problem  see, {\sl e.g.}, \cite{ChPa, EL}). Namely, let $H$ be a separable Hilbert space, and let $T:H\to H$ be a contraction, namely, $\|T\|\leq 1$. The {\sl defect of $T$} is
\[
\delta(T):=\dim \overline{(I-T^\ast T)H}.
\]
Roughly speaking, $\delta(T)$ ``measures'' how much far from an isometry  $T$ is. By the universality of the  shifts (see, {\sl e.g.}, \cite[Section~1.5]{RoRo}), every contraction $T:H\to H$  with $\delta(T)\leq d$ is unitarily equivalent to the restriction of the backward shift $S^\ast$ to some of its invariant closed subspaces. By Theorem~\ref{Thm:no-maximal-intro}, the backward shift $S^\ast:\Ha\to \Ha$ does not have non-trivial minimal invariant closed subspaces of dimension greater than $1$. Hence, we have

\begin{corollary}
Let $H$ be a separable Hilbert space. Let $T:H\to H$ be a contraction such that $\delta(T)<\infty$. Then there exists a closed subspace $M\subsetneq H$, $M\neq \{0\}$ such that $T(M)\subseteq M$.
\end{corollary}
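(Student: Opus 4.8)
The plan is to derive the Corollary directly from Theorem~\ref{Thm:no-maximal-intro} together with the universality of backward shifts of finite index, as sketched in the surrounding discussion. First I would dispose of the trivial cases: if $T$ is not injective then $M:=\ker T\neq\{0\}$ is a proper closed $T$-invariant subspace (proper since otherwise $T=0$, for which any one-dimensional subspace works, so we may assume $T\neq 0$); similarly if $T$ has nonclosed or nondense range the closure of the range, or its orthogonal complement, furnishes an invariant subspace. Hence one may assume from the start that $T$ is injective with dense range, and in particular that $H$ is infinite-dimensional (in finite dimensions every operator trivially has eigenvectors). If $\delta(T)=0$ then $T$ is an isometry, and either $T$ is unitary --- in which case, $H$ being separable, the spectral theorem (or a direct argument) produces a nontrivial reducing subspace --- or $T$ is a proper isometry, whose Wold decomposition again yields an invariant subspace; so we may further assume $1\le\delta(T)=:\delta<\infty$.

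Next I would invoke the universality property of the backward shift (see \cite[Section~1.5]{RoRo}): since $\|T\|$ may be assumed $\le 1$ after rescaling (rescaling does not change the lattice of invariant subspaces), and since $\delta(T)\le d$ for $d:=\max\{\delta,2\}$, there is a closed $S^\ast$-invariant subspace $W\subseteq\Ha$ and a unitary $U:H\to W$ with $U T U^{-1}=S^\ast|_W$. Here $\Ha$ is the direct sum of $d$ copies of $H^2(\D)$ and $S^\ast$ is the backward shift on it. Equivalently, putting $N:=W^\perp$, the space $N$ is a closed $S$-invariant subspace of $\Ha$, and the invariant subspaces of $S^\ast|_W$ correspond bijectively (via orthogonal complement inside $W$, i.e. via $M\mapsto W\ominus M$) to the $S$-invariant subspaces $N'$ with $N\subseteq N'\subseteq\Ha$.

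Now comes the crux, which is really just a bookkeeping translation of Theorem~\ref{Thm:no-maximal-intro}. If $N=\Ha$ then $W=\{0\}$, forcing $H=\{0\}$, contradicting $\dim H=\infty$; so $N\subsetneq\Ha$. If $\dim N^\perp\le 1$, then $\dim W\le 1$, so $\dim H\le 1$, again impossible; hence $\dim N^\perp\ge 2$. Theorem~\ref{Thm:no-maximal-intro} then provides a closed $S$-invariant subspace $N\subsetneq N'\subsetneq\Ha$. Set $M':=W\ominus N'=W\cap (N')^\perp$. Because $N\subsetneq N'$ we have $M'\subsetneq W$, i.e. $M'\neq W$; because $N'\subsetneq\Ha$ we have $M'\supsetneq\{0\}$; and $M'$ is $S^\ast|_W$-invariant since $N'$ is $S$-invariant. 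Transporting back through $U$, the subspace $M:=U^{-1}(M')\subseteq H$ is closed, nonzero, proper, and $T$-invariant, which is exactly the assertion.

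I expect the main obstacle to be purely expository rather than mathematical: one must invoke the universality statement in precisely the right normalized form (isometric up to rescaling, with the defect bounding the index of the model shift) and then carefully match ``invariant under the backward shift on $W$'' with ``$S$-invariant subspace of $\Ha$ containing $N$'' via the orthogonal complement, being careful about strictness of the inclusions on both ends. The finite-dimensional and isometric edge cases are the only places where one cannot simply quote Theorem~\ref{Thm:no-maximal-intro}, and they are handled by elementary classical facts (existence of eigenvectors, the Wold decomposition, and separability of $H$ for the unitary case).
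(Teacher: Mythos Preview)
Your argument is exactly the paper's: the Corollary has no separate proof there beyond the two sentences preceding it, which invoke universality of the backward shift (\cite[Section~1.5]{RoRo}) together with Theorem~\ref{Thm:no-maximal-intro}, and you have simply unpacked that sketch with the bookkeeping about orthogonal complements made explicit. One word of caution: the rescaling step is not innocuous --- dividing $T$ by its norm can destroy finiteness of the defect (e.g.\ on $\ell^2$ take $Te_1=2e_1$, $Te_n=e_{n+1}$ for $n\ge2$, where $\delta(T)=1$ but $\delta(T/2)=\infty$) --- so the reduction to $\|T\|\le1$ needs a different justification, though the paper itself simply quotes the universality reference without addressing this point either.
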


\medskip

\centerline{\bf Acknowledgements}

\medskip

Part of this work has been done while the first named author was ``Profesor Distinguido'' at the research institute \emph{Instituto de Ciencias Matem\'aticas (ICMAT),
	 Spain}. He wants sincerely to thank ICMAT for the possibility of spending time there and work in a perfect atmosphere.
	
\medskip

The authors were not aware of the paper \cite{Guo} when completing the first draft of this manuscript. They sincerely thank Prof. Alexandru Aleman and Prof. Stefan Richter for bringing this reference to their attention and for other useful comments on the paper.

\medskip

\centerline{\bf Conflict of interest and Data Availability}

On behalf of all authors, the corresponding author states that there is no conflict of interest.

No datasets were generated or analysed during the current study.

\section{Preliminaries}

Let $d$ be a positive integer and let $\Ha$ be the Hilbert space given by the direct sum of $d$ copies of the Hardy space $H^2(\D)$. It is convenient to write
\[
\Ha=H^2(\D)\e_1\oplus \ldots \oplus H^2(\D)\e_d,
\]
where $\e_j=(0,\ldots, 0,1,0,\ldots, 0)$, with $1$ in the $j$-th position, $j=1,\ldots, d$.

The elements of $\Ha$ are given by $\sum_{j=1}^d f_j\e_j$ where $f_j\in H^2(\D)$, $j=1,\ldots, d$. The vector space $\Ha$ is a (separable) Hilbert space with Hermitian product given by
\[
\la \sum_{j=1}^d f_j\e_j ,  \sum_{j=1}^d g_j\e_j\ra:=\sum_{j=1}^d \la f_j, g_j\ra_{H^2(\D)}.
\]
The shift operator $S:\Ha\to \Ha$ is defined as
\[
S(f_1(z)\e_1+\ldots+f_d(z)\e_d):=zf_1(z)\e_1+\ldots+zf_d(z)\e_d.
\]
As a convenient notation, throughout the paper the shift operator for $d=1$, {\sl i.e.}, $\Ha=H^2(\D)$, will be denoted by $S_1$, {\sl i.e.}, $S_1(f(z)):=zf(z)$, $f\in H^2(\D)$.

As customary, if $X$ is a Hilbert space and $T:X\to X$ is a bounded linear operator, we say that a closed subspace $Y\subset X$ is $T$-invariant if $T(Y)\subseteq Y$.

Given a bounded function $g\in H^\infty(\D)$, if $V\subset \Ha$ is a closed subspace, we denote by $gV$ the image of $V$ via the continuous linear operator of $\Ha$ into $\Ha$ given by
\[
 f_1\e_1+\ldots + f_d\e_d\mapsto g f_1\e_1+\ldots + g f_d\e_d.
\]
In general, $gV$ is not closed. However, if $g$ is {\sl inner} (that is, $g:\D\to \D$ is a holomoprhic function such that $\lim_{r\to 1^-}|g(r e^{i\theta})|=1$ for a.e. $\theta\in[0,2\pi]$), then the operator $\Ha \ni F\mapsto gF$ is an isometry and if $V$ is closed subspace of $\Ha$, then so is $gV$.

The algebra of inner functions is well studied (see, {\sl e.g.}, \cite{CiRo} and references therein). As customary, we say that a inner function $\phi$ is {\sl invertible} or {\sl constant} if $\phi(z)=\lambda$ for all $z\in\D$ and some   unimodular constant $\lambda$. We say that an inner function $\phi$ divides a inner function $\varphi$ provided $\frac{\phi}{\varphi}$ is  a inner function. Given any family $\{\phi_j\}_{j\in\mathcal I}$ of inner functions, there exists a inner function $\phi$, unique up to multiplication by a unimodular constant, such that $\phi$ divides $\phi_j$ for all $j\in\mathcal I$ and, if $\varphi$ is a inner function that divides all elements of $\{\phi_j\}_{j\in\mathcal I}$, then $\varphi$ divides $\phi$. The function $\phi$ is called the {\sl greatest common divisor} of $\{\phi_j\}_{j\in\mathcal I}$---strictly speaking,  $\phi$ is not unique, which introduces some ambiguity in the choice. However, since it is unique up to multiplication by a unimodular constant, this ambiguity is inconsequential. Two inner functions are {\sl coprime} if their greatest common divisor is $1$.

By the classical Beurling theorem \cite{Beu} (or see, {\sl e.g.}, \cite{CiRo, Duren}), a closed subspace $V$ of $H^2(\D)$ is $S_1$-invariant if and only if there exists $\phi\in H^\infty(\D)$  an inner function, such that $V= \phi H^2(\D)$.

The following lemma is well known, and we omit the proof:
\begin{lemma}\label{Lem:dim-one}
Let $\varphi$ be an inner function. Suppose that $\dim (\varphi H^2(\D))^\perp\geq 2$. Then there exists a not constant inner function $\tilde\varphi$ such that $\tilde\varphi$ divides $\varphi$ and $\varphi$ does not divide $\tilde\varphi$. In particular,  $\varphi H^2(\D)\subsetneq \tilde\varphi H^2(\D)\subsetneq H^2(\D)$.
\end{lemma}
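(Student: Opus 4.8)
The plan is to reduce the statement to the elementary fact that an inner function $\varphi$ with $\dim(\varphi H^2(\D))^\perp\ge 2$ must factor nontrivially, namely as a product $\varphi=\tilde\varphi\,\theta$ of two non-constant inner functions. First I would invoke the canonical factorization $\varphi=\lambda\,B\,s_\mu$, where $\lambda$ is a unimodular constant, $B$ is the Blaschke product formed from the zeros of $\varphi$ in $\D$ counted with multiplicity, and $s_\mu$ is the singular inner function attached to a positive singular measure $\mu$ on $\de\D$; I would also recall the standard facts that $\dim(\varphi H^2(\D))^\perp$ equals the number of zeros of $B$ when $\mu=0$ and $B$ is a finite Blaschke product, and that it vanishes exactly when $\varphi$ is a unimodular constant.

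Next I would split into cases. If $B$ and $s_\mu$ are both non-constant, take $\tilde\varphi:=B$ and $\theta:=\lambda\,s_\mu$. If $s_\mu$ is constant but $B$ has at least two zeros counted with multiplicity, peel off a single Blaschke factor, writing $B=b_a B_1$ with $b_a(z)=\frac{z-a}{1-\bar a z}$ and $B_1$ non-constant, and take $\tilde\varphi:=b_a$, $\theta:=\lambda\,B_1$. If $B$ is constant but $s_\mu$ is non-constant, then $\mu\neq 0$, so $\mu=\mu_1+\mu_2$ with $\mu_1,\mu_2$ non-zero positive singular measures — if $\mu$ is not a single point mass one splits a Borel subset of positive and non-full measure, and if $\mu=c\,\delta_{\theta_0}$ one writes $c=\tfrac c2+\tfrac c2$ — and take $\tilde\varphi:=s_{\mu_1}$, $\theta:=\lambda\,s_{\mu_2}$. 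In the only remaining case, $s_\mu$ constant and $B$ either constant or a single Blaschke factor, one has $\dim(\varphi H^2(\D))^\perp\le 1$, contradicting the hypothesis.

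In every admissible case we then have $\varphi=\tilde\varphi\,\theta$ with both $\tilde\varphi$ and $\theta$ non-constant inner functions. I would conclude by noting that $\tilde\varphi$ divides $\varphi$, whereas $\varphi$ does not divide $\tilde\varphi$: otherwise $\tilde\varphi/\varphi=1/\theta$ would be inner, forcing $\theta$ to be a unimodular constant. For the ``in particular'' clause, $\varphi H^2(\D)=\tilde\varphi\,(\theta H^2(\D))\subseteq\tilde\varphi H^2(\D)$, the inclusion being strict because $\theta H^2(\D)\subsetneq H^2(\D)$ and the isometry $F\mapsto\tilde\varphi F$ preserves strict inclusions — concretely $\tilde\varphi\notin\varphi H^2(\D)$, since $1/\theta\notin H^2(\D)$ — while $\tilde\varphi H^2(\D)\subsetneq H^2(\D)$ because $\tilde\varphi$ is non-constant, so $1\notin\tilde\varphi H^2(\D)$.

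I do not expect a genuine obstacle here: the result is bookkeeping on the canonical factorization of $\varphi$. The two points that call for a little care are the identification of $\dim(\varphi H^2(\D))^\perp=1$ with $\varphi$ being a unimodular multiple of a single Blaschke factor, and the splitting of a singular measure concentrated at one point; both are dealt with above.
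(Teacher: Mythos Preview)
Your argument is correct and entirely standard. The paper itself omits the proof of this lemma, declaring it ``well known,'' so there is no approach to compare against; your canonical-factorization case analysis is exactly the sort of routine verification the authors are implicitly deferring to the reader.
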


We will make use of the inner-outer factorization of a holomorphic function  (see, {\sl e.g.}, \cite{Duren}):

\begin{definition}
For any $f\in H^2(\D)$, $f\not\equiv 0$, we write $f=I(f) O(f)$ for the inner-outer factorization, where $I(f)$ is inner and $O(f)\in H^2(\D)$ is outer.
\end{definition}

For the aim of this paper, the only relevant property of an outer function is that it is $S_1$ cyclic. That is, if $O\in H^2(\D)$ is an outer function,
\[
\overline{\hbox{span}\{S^n(O): n\in\N\}}^{H^2(\D)}=\overline{\{p O: p \hbox{ polynomial}\}}=H^2(\D).
\]

In the following sections, we will need the lemma below, for which we provide a proof due to the lack of a direct reference.

\begin{lemma}\label{Lem:coprime-corona}
Let $\phi_1, \ldots, \phi_m$ be inner functions. Let $\phi$ be the (inner function) greatest common divisor of $\phi_1, \ldots, \phi_m$. Then there exist $m$ sequences $\{h^j_n\}_{n\in\N}\subset H^2(\D)$, $j=1,\ldots, m,$ such that $\{\sum_{j=1}^m h^j_n\phi_j\}$ converges to $\phi$ in $H^2(\D)$.
\end{lemma}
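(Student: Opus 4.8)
The statement is a density / corona-type assertion: although $\phi$ need not lie in the ideal algebraically generated by $\phi_1,\dots,\phi_m$ inside $H^\infty(\D)$, it should lie in the $H^2$-closure of that ideal. My plan is to reduce to the case $\phi=1$, prove that case via the fact that the zero sets and singular measures of coprime inner functions are ``as disjoint as possible,'' and then pull the general statement back by dividing out $\phi$.

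\smallskip

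\emph{Step 1: Reduction to $\phi \equiv 1$.} Write $\phi_j = \phi\,\psi_j$ where each $\psi_j$ is inner (possible since $\phi$ divides each $\phi_j$), and observe that the greatest common divisor of $\psi_1,\dots,\psi_m$ is the constant $1$ (any common inner divisor of the $\psi_j$ would, multiplied by $\phi$, be a common divisor of the $\phi_j$ strictly larger than $\phi$ unless constant). Since multiplication by the inner function $\phi$ is an isometry on $H^2(\D)$, the convergence $\sum_j h^j_n \phi_j \to \phi$ in $H^2(\D)$ is equivalent to $\sum_j h^j_n \psi_j \to 1$ in $H^2(\D)$. So it suffices to treat coprime inner functions $\psi_1,\dots,\psi_m$ and show $1$ lies in the $H^2$-closure of $\psi_1 H^2(\D)+\dots+\psi_m H^2(\D)$.

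\smallskip

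\emph{Step 2: The coprime case.} Let $M := \overline{\psi_1 H^2(\D)+\dots+\psi_m H^2(\D)}^{H^2(\D)}$. This is a closed subspace, and it is $S_1$-invariant because each $\psi_j H^2(\D)$ is. By Beurling's theorem $M = \theta H^2(\D)$ for some inner $\theta$ (it is nonzero, containing e.g.\ $\psi_1$). Since $\psi_j \in M = \theta H^2(\D)$, the inner function $\theta$ divides $\psi_j$ for every $j$; hence $\theta$ divides the greatest common divisor of the $\psi_j$, which is $1$. Therefore $\theta$ is constant, $M = H^2(\D)$, and in particular $1 \in M$. Unwinding the closure, there are finite sums $\sum_{j=1}^m g^j_n \psi_j$ with $g^j_n \in H^2(\D)$ converging to $1$ in $H^2(\D)$; relabel these as the desired sequences (taking $h^j_n := \phi$-free versions transported back through Step 1, i.e.\ $h^j_n = g^j_n$ gives $\sum h^j_n \phi_j = \phi \sum g^j_n \psi_j \to \phi$).

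\smallskip

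The only place any real work is hidden is the divisibility step ``$\theta \mid \psi_j$ for all $j$ forces $\theta$ constant'': this is exactly the defining property of the greatest common divisor recalled in the Preliminaries, so there is no obstacle to speak of. I briefly considered a more hands-on approach via the corona theorem or via explicit Blaschke/singular-part manipulation, but that would prove the stronger $H^\infty$-ideal statement only under extra hypotheses and is unnecessary here; the Beurling-theorem argument above is cleaner and gives precisely the $H^2$-closure statement claimed. The subtlety worth flagging for the reader is simply that one cannot in general take the sequences to be eventually constant — the sum $\psi_1 H^2+\dots+\psi_m H^2$ need not itself be closed, so passing to the closure (equivalently, using sequences) is essential.
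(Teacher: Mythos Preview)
Your proof is correct and uses essentially the same idea as the paper's: apply Beurling's theorem to the $S_1$-invariant closure of $\sum_j \phi_j H^2(\D)$ and invoke the defining property of the greatest common divisor. The only difference is cosmetic---you first reduce to the coprime case $\phi=1$, whereas the paper works directly, showing that the Beurling inner function $\varphi$ of $\overline{\sum_j \phi_j H^2(\D)}$ both divides $\phi$ (since $\varphi\mid\phi_j$ for all $j$) and is divided by $\phi$ (since $\phi_j H^2(\D)\subseteq \phi H^2(\D)$), forcing $\overline{\sum_j \phi_j H^2(\D)}=\phi H^2(\D)$.
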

\begin{proof}
We have that $V:=\sum_{j=1}^m\phi_j H^2(\D)\subset \phi H^2(\D)$. Thus $\overline{V}^{H^2(\D)}\subseteq \phi H^2(\D)$. Since $V$ is $S_1$-invariant, by Beurling Theorem there exists a inner function $\varphi$ such that $\overline{V}^{H^2(\D)}=\varphi H^2(\D)$. Hence, $\varphi$ divides  $\phi_1, \ldots, \phi_m$. In particular, $\varphi$ divides $\phi$. Thus,
\[
\overline{V}^{H^2(\D)}\subseteq\phi H^2(\D)\subseteq \varphi H^2(\D)=\overline{V}^{H^2(\D)},
\]
and the result follows.
\end{proof}

\section{Closed $S$-invariant spaces  associated to matrices}\label{Sec:deter-operator}

In this section we associate to every $d\times d$ matrix whose entries are in $H^\infty(\D)$ a closed $S$-invariant subspace of $\Ha$. We start with some general preliminaries.

\begin{definition}
For  positive integers $m, n$, we denote by $\mathcal M_\infty(m\times n)$ the set of all $m\times n$ matrices whose entries are in $H^\infty(\D)$. In other words, $B\in \mathcal M_\infty(m\times n)$ if there exist $b_{jk}\in H^\infty(\D)$, $j=1,\ldots, m$, $k=1,\ldots, n$, such that
\[
B=\left(\begin{matrix} b_{11} & \ldots &  b_{1n}\\
\ldots & \ldots & \ldots \\   b_{m1} & \ldots & b_{mn}\\  \end{matrix} \right).
\]
\end{definition}

For square matrices we can associate certain operators which will be crucial for our characterization of closed $S$-invariant spaces:

\begin{definition}\label{Def:operator LAj}
Let $m\in \{1,\ldots, d\}$. Let $j\in \{1,\ldots, m\}$ and let $J=\{j_1,\ldots, j_m\}$ with $1\leq j_1<\ldots <j_m\leq d$. Let $B\in \mathcal M_\infty(m\times m)$. The {\sl $(j, J)$-determinantal operator associated to $B$} is the   operator $L_{B, j, J}:\Ha \to H^2(\D)$ defined as follows:
\[
L_{B, j, J}(f_1\e_1+\ldots+f_d\e_d):=\det \left(\begin{matrix} b_{11} & \ldots &  b_{1m}\\
\ldots & \ldots & \ldots \\  b_{(j-1)1} & \ldots &  b_{(j-1)m}\\ f_{j_1} & \ldots & f_{j_m}\\  b_{(j+1)1} & \ldots & b_{(j+1)m}\\ \ldots & \ldots & \ldots \\  b_{m1} & \ldots &  b_{mm}\\  \end{matrix} \right)
\]
In case $m=d$ (hence $J=\{1,\ldots, d\}$) we simply denote $L_{B, j, J}$ by $L_{B,j}$.
\end{definition}

\begin{lemma}\label{Lem:continuous-S-L}
Let $m\in \{1,\ldots, d\}$.  Let $B\in \mathcal M_\infty(m\times m)$. For every $j\in \{1,\ldots, m\}$ and  $J=\{j_1,\ldots, j_m\}$ with $1\leq j_1<\ldots <j_m\leq d$, the operator $L_{B, j, J}:\Ha \to H^2(\D)$ is continuous and $L_{B, j, J} \circ S= S_1 \circ L_{B, j, J}$.
\end{lemma}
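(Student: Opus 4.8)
The statement has two parts: continuity of $L_{B,j,J}$ and the intertwining relation $L_{B,j,J}\circ S = S_1\circ L_{B,j,J}$. I would handle them in this order.

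For continuity, the plan is to expand the determinant along the row containing the $f_{j_k}$'s. By cofactor expansion along the $j$-th row, one gets
\[
L_{B,j,J}(f_1\e_1+\ldots+f_d\e_d) = \sum_{k=1}^m (-1)^{j+k}\, c_{jk}\, f_{j_k},
\]
where each $c_{jk}\in H^\infty(\D)$ is the determinant of the $(m-1)\times(m-1)$ minor of $B$ obtained by deleting the $j$-th row and $k$-th column; since the entries $b_{pq}$ lie in $H^\infty(\D)$ and $H^\infty(\D)$ is a ring, each $c_{jk}$ is indeed in $H^\infty(\D)$. Thus $L_{B,j,J}$ is a finite $H^\infty(\D)$-linear combination of the coordinate projections $F\mapsto f_{j_k}$ followed by multiplication operators $M_{c_{jk}}$ on $H^2(\D)$. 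Each coordinate projection $\Ha\to H^2(\D)$ is clearly bounded (norm $\le 1$), and each $M_{c_{jk}}$ is bounded on $H^2(\D)$ with norm $\|c_{jk}\|_{H^\infty}$. Hence $L_{B,j,J}$ is continuous, with $\|L_{B,j,J}\|\le \sum_{k=1}^m \|c_{jk}\|_{H^\infty}$.

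For the intertwining relation, I would use the same cofactor expansion together with the facts that $S(f_1\e_1+\ldots+f_d\e_d) = (zf_1)\e_1+\ldots+(zf_d)\e_d$ and that multiplication by $z$ commutes with multiplication by any $c_{jk}\in H^\infty(\D)$ on $H^2(\D)$. Concretely,
\[
L_{B,j,J}(S(F)) = \sum_{k=1}^m (-1)^{j+k} c_{jk}\,(z f_{j_k}) = z\sum_{k=1}^m (-1)^{j+k} c_{jk}\, f_{j_k} = S_1\bigl(L_{B,j,J}(F)\bigr),
\]
which is exactly the claim. Equivalently, without invoking the expansion, one can observe that replacing each $f_{j_k}$ by $zf_{j_k}$ multiplies the row $(f_{j_1},\ldots,f_{j_m})$ of the determinant by the scalar function $z$, and multiplying a single row of a determinant by $z$ multiplies the determinant by $z$; this is just multilinearity of the determinant in its rows.

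There is no real obstacle here: the only point requiring a line of justification is that $H^\infty(\D)$ is closed under the ring operations (so the cofactors $c_{jk}$ are again bounded holomorphic), and that multiplication operators on $H^2(\D)$ by $H^\infty$ symbols are bounded and commute with $S_1$ — both standard. The cofactor-expansion viewpoint is the cleanest way to present both halves simultaneously, so I would lead with it and keep the proof short.
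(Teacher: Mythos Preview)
Your proposal is correct and essentially identical to the paper's own proof: both arguments use the Laplace (cofactor) expansion along the $j$-th row to express $L_{B,j,J}(F)$ as $\sum_{k=1}^m (-1)^{j+k}\det(B^{jk})\,f_{j_k}$, observe that the minors $\det(B^{jk})$ lie in $H^\infty(\D)$ so multiplication by them is bounded on $H^2(\D)$, and derive the intertwining relation from multilinearity of the determinant in its rows. The only cosmetic difference is that the paper records the linearity and intertwining first (directly via multilinearity) and then invokes the expansion for the norm estimate, whereas you lead with the expansion; the content is the same.
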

\begin{proof}
Let $F_1, F_2\in \Ha$. Since the determinant is $m$-multilinear on the rows, it follows that $L_{B, j, J}(F_1+F_2)=L_{B, j, J}(F_1)+L_{B, j, J} (F_2)$ and $L_{B, j, J}(zF_1)=zL_{B, j, J}(F_1)$. Therefore the operator $L_{B, j, J}$ is linear and intertwines $S$ and $S_1$. Finally, using the Laplace expansion of the determinant with respect to the $j$-th row, we see that, if $F=f_1\e_1+\ldots +f_d\e_d\in \Ha$,
\begin{equation}\label{Eq:L-determ-Laplace}
L_{B, j, J}(F)=\sum_{k=1}^d (-1)^{k+j} f_{j_k} \det (B^{jk}),
\end{equation}
where $B^{jk}$ is the $(m-1)\times (m-1)$ matrix obtained from $B$ by removing the $j$-th row and the $k$-th column. Note that $\det (B^{jk})$ is a polynomial in the $b_{lm}$'s. Since $b_{lm}\in H^\infty(\D)$, it follows that there exists $C>0$ such that $\|\det (B^{jk})\|_\infty\leq C$ for all $k=1,\ldots, m$. Therefore,
\[
\|L_{B, j, J}(F)\|_{H^2(\D)}\leq C \sum_{k=1}^m \|f_{j_k}\|_{H^2(\D)}\leq C\sqrt{m}\sqrt{\sum_{k=1}^m \|f_{j_k}\|^2_{H^2(\D)}}\leq C\sqrt{m}\|F\|_\Ha,
\]
hence $L_{B,j, J}$ is bounded.
\end{proof}

A first consequence of this lemma, recalling the definition of determinantal subspaces from the introduction is the following:
\begin{corollary}\label{Cor:determinantal-space-S}
Every determinantal subspace of $\Ha$ is closed and $S$-invariant.
\end{corollary}

Now we go on with our construction:

\begin{definition}
Let $A\in\mathcal M_\infty(d\times d)$. Let $\mathcal A(A)=\{a_{jk}: a_{jk}\not\equiv 0\}$. If $A$ is not the zero matrix---hence $\mathcal A(A)\neq\emptyset$---we denote by $\phi_A$ the (inner function) greatest common divisor of $\{I(a_{jk}): a_{jk}\in\mathcal A(A)\}$. We call $\phi_A$  the {\sl inner greatest common divisor of $A$}.
\end{definition}

Finally for every $j,k\in \{1,\ldots, d\}$ let
\[
\hat a_{jk}:=\frac{a_{jk}}{\phi_A}.
\]
Note that $\hat a_{jk}\in H^\infty(\D)$ for all $j,k=1,\ldots, d$.

\begin{definition}\label{Def:reduc-matrix}
We call the matrix $\hat A:=(\hat a_{jk})_{j,k=1,\ldots, d}\in\mathcal M_\infty(d\times d)$ the {\sl reduced matrix} of $A$.
\end{definition}

\subsection{Case $\det A\not\equiv 0$}

Let $A\in\mathcal M_\infty(d\times d)$ and assume  $\det A\not\equiv 0$. Clearly  $\det \hat A\not\equiv 0$. Hence, it is well defined
\[
\varphi_A:=I(\det \hat A),
\]
 the inner factor of $\det \hat A$.

Recall that, if $L:\Ha\to H^2(\D)$ is a linear operator and $V\subseteq H^2(\D)$, the fiber of $L$ over $V$ is defined as
\[
L^{-1}(V):=\{F\in \Ha: L(F)\in V\}.
\]
Clearly, if $L$ is continuous and $V$ is closed in $H^2(\D)$, then $L^{-1}(V)$ is closed in $\Ha$.

\begin{definition}\label{Def:NA}
Let $A\in\mathcal M_\infty(d\times d)$, and assume $\det A\not\equiv 0$. We let
\[
\mathcal N_A:=\phi_A \left(\bigcap_{j=1}^d L_{\hat A, j}^{-1}(\varphi_A H^2(\D)) \right),
\]
and call it the {\sl closed $S$-invariant space associated to $A$}.
\end{definition}

With the notation introduced in the Introduction,
\[
\mathcal N_A=\phi_A\mathcal Q_{\varphi_A},
\]
 where $\mathcal Q_{\varphi_A}$ is a determinantal space.

\subsection{Case $\det A\equiv 0$} Let $A\in\mathcal M_\infty(d\times d)$ and assume  $\det A\equiv 0$.

 As a matter of notation, let $m,n$ be positive integers. If $z\in\D$, and $B\in \mathcal M_\infty(m\times n)$, we denote by $B(z)$ the $m\times n$ matrix whose entries are the complex numbers given by evaluating the entries of $B$ at $z$.

\begin{definition}\label{Def:rank-of-A}
Let $B\in\mathcal M_\infty(m\times n)$  where $m, n$ are positive intergers. We say that $B$ has {\sl rank} $k\in \{0,\ldots, \min\{m,n\}\}$ if $\hbox{rank}(B(z))\leq k$ for all $z\in\D$ and if there exists $z_0\in \D$ such that $\hbox{rank}(B(z_0))=k$.
\end{definition}

\begin{remark}\label{Rem:rank-k-means-det}
Note that if $A\in\mathcal M_\infty(d\times d)$ then $A$ has rank $d$ if and only if $\det A\not\equiv 0$. On the other hand, by Kronecker's theorem, the rank of $A$ is $k<d$ if and only if there exists a $k\times k$ matrix $B$ obtained by removing $d-k$ rows and $d-k$ columns from $A$    such that $\det B\not\equiv 0$, and all matrices containing $B$ and obtained  from $A$ by removing $d-k-1$ rows and $d-k-1$ columns  have determinants identically zero.
\end{remark}

\begin{remark}\label{Rem:same-rank-A-A-hat}
Clearly, $A$ is the zero matrix (that is the $d\times d$ matrix whose entries are the identically $0$ function) if and only if $\hbox{rank}(A)=0$. On the other hand, if $A$ is not the zero matrix, then $A=\phi_A\hat A$ and, since $\phi_A$ has at most a discrete set of zeros in $\D$, it is clear by the previous observation that $\hbox{rank}(\hat A)=\hbox{rank}(A)$.
\end{remark}

\begin{definition}
Let $A\in \mathcal M_\infty(d\times d)$, and assume that $\hbox{rank}(A)=k\in\{1,\ldots, d-1\}$. We let $\mathcal R(A)$ be the set of all $f_1\e_1+\ldots+f_d\e_d\in\Ha$ such that
\begin{equation}\label{Eq:rank-f-A}
 \hbox{rank}\left(\begin{matrix} f_1 & \ldots &f_d\\ \hat a_{11}& \ldots &\hat a_{1d}\\ \vdots& \ldots & \vdots\\  \hat a_{d1}& \ldots &\hat a_{dd} \end{matrix}  \right)= k.
\end{equation}
\end{definition}

\begin{lemma}\label{lem:r(A)-S-closed-inv}
Let $A\in\mathcal M_\infty(d\times d)$, and assume that $\hbox{rank}(A)=k\in\{1,\ldots, d-1\}$. Then $\mathcal R(A)$ is a closed $S$-invariant subspace of $\Ha$. Moreover, $\mathcal R(A)$ is a determinantal space of the form $\mathcal Q_0$.
\end{lemma}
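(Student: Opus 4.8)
The plan is to realize $\mathcal R(A)$ as a finite intersection of determinantal bricks $Q_0$, i.e.\ as the common zero set of finitely many determinantal operators, and then invoke Corollary~\ref{Cor:determinantal-space-S}. The key observation is that the rank condition \eqref{Eq:rank-f-A} for a tuple $F=f_1\e_1+\ldots+f_d\e_d$ says exactly that the $(k+1)\times d$ matrix obtained by stacking the row $(f_1,\ldots,f_d)$ on top of $\hat A$ has rank $\le k$; since $\mathrm{rank}(\hat A)=\mathrm{rank}(A)=k$ (Remark~\ref{Rem:same-rank-A-A-hat}), the reverse inequality $\mathrm{rank}\ge k$ is automatic, so membership in $\mathcal R(A)$ is equivalent to the vanishing of every $(k+1)\times(k+1)$ minor of that stacked matrix. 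A $(k+1)\times(k+1)$ minor that does not use the top row is a $(k+1)\times(k+1)$ minor of $\hat A$, hence identically zero by hypothesis; so the only nontrivial conditions come from the minors that do use the row $(f_1,\ldots,f_d)$. Expanding such a minor along its first row, each is precisely a determinantal operator $L_{\hat A, 1, J'}$ applied to $F$ for a suitable choice of $k$ rows of $\hat A$ and $k+1$ columns $J'$ (up to sign and up to the position of the $f$-row, which by row operations/sign changes we may place in the top row). Therefore
\[
\mathcal R(A)=\bigcap_{J', \text{rows}} L_{\hat A, 1, J'}^{-1}\big(0\big),
\]
a finite intersection of determinantal bricks $Q_0$, which is a determinantal space of the form $\mathcal Q_0$.

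First I would make precise the matrix-theoretic reduction: set $\Phi(F)$ to be the $(k+1)\times d$ matrix with first row $(f_1,\ldots,f_d)$ and remaining rows those of $\hat A$; note $\mathrm{rank}(\Phi(F))\ge \mathrm{rank}(\hat A)=k$ always, so \eqref{Eq:rank-f-A} holds iff $\mathrm{rank}(\Phi(F))\le k$ iff all $(k+1)$-minors of $\Phi(F)$ vanish. Second, I would discard the minors not involving the first row as identically zero. Third, for each remaining minor---indexed by a choice of $k$ rows $i_1<\dots<i_k$ of $\hat A$ and $k+1$ columns $s_1<\dots<s_{k+1}$---I would identify it, via Laplace expansion along the $f$-row exactly as in \eqref{Eq:L-determ-Laplace}, with $\pm L_{B,1,J}(F)$ where $B\in\mathcal M_\infty((k+1)\times(k+1))$ has its last $k$ rows equal to rows $i_1,\dots,i_k$ of $\hat A$, first row a dummy, $j=1$, and $J=\{s_1,\dots,s_{k+1}\}$. (Here I use that Definition~\ref{Def:operator LAj} allows any $m\in\{1,\dots,d\}$ and any $J$ of size $m$; with $m=k+1\le d$ this is legitimate.) Fourth, I would conclude $\mathcal R(A)=\bigcap L_{B,1,J}^{-1}(\{0\})$ over this finite index set, hence $\mathcal R(A)$ is a determinantal space of type $\mathcal Q_0$ (taking $\varphi\equiv 0$, so $\varphi H^2(\D)=\{0\}$), and apply Corollary~\ref{Cor:determinantal-space-S} to get that it is closed and $S$-invariant.

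I expect the main technical nuisance to be purely bookkeeping: matching the sign conventions and the row/column index sets in Definition~\ref{Def:operator LAj} with the minors of $\Phi(F)$, and checking that the existence of $z_0$ with $\mathrm{rank}(\hat A(z_0))=k$ guarantees the index set is nonempty (so $\mathcal R(A)$ is genuinely a finite, nonvacuous intersection and is not all of $\Ha$ in a trivial way)---though even an empty intersection would harmlessly give $\mathcal R(A)=\Ha$. There is one subtlety worth flagging explicitly: a determinantal operator as defined places the $f$-row in a \emph{fixed} position $j$, whereas the minors of $\Phi(F)$ have the $f$-row on top; since permuting rows of a determinant only changes it by a sign, $\pm L_{B,1,J}$ and the minor have the same zero set, so this does not affect the equality of subspaces. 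Beyond that, the linearity, continuity, and intertwining with $S$ are immediate from Lemma~\ref{Lem:continuous-S-L}, so the ``moreover'' part is what carries the content and the first assertion follows from it together with Corollary~\ref{Cor:determinantal-space-S}.
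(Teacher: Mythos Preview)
Your approach is essentially the same as the paper's: both realize $\mathcal R(A)$ as a finite intersection of kernels of determinantal operators and then invoke Lemma~\ref{Lem:continuous-S-L}/Corollary~\ref{Cor:determinantal-space-S}. There is one slip to fix: your $\Phi(F)$ has first row $(f_1,\ldots,f_d)$ and the remaining rows those of $\hat A$, so it is $(d+1)\times d$, not $(k+1)\times d$ as you wrote; the argument is unaffected once this is corrected.

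The only substantive difference is efficiency. You take \emph{all} $(k+1)\times(k+1)$ minors of $\Phi(F)$ that use the $f$-row (a collection of size $\binom{d}{k}\binom{d}{k+1}$), whereas the paper first normalizes so that the top-left $k\times k$ block of $\hat A$ has nonvanishing determinant and then applies Kronecker's theorem (Remark~\ref{Rem:rank-k-means-det}) to reduce to just the $d-k$ ``bordering'' minors $R_m$ obtained by adjoining the $f$-row and one extra column $m\in\{k+1,\ldots,d\}$ to that block. Your version avoids the WLOG reduction and the appeal to Kronecker, at the cost of a larger (but still finite) index set; the paper's version gives the more economical description $\mathcal R(A)=\bigcap_{m=k+1}^d R_m^{-1}(\{0\})$, which is also the form used later in the paper (e.g.\ in the proof of Proposition~\ref{Prop:NA-no-maximal}). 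Both are valid and yield the same conclusion.
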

\begin{proof}
Since the rank is invariant by switching rows or columns, we can assume that
\[
\det \left(\begin{matrix}\hat a_{11}& \ldots & \hat a_{1k}\\ \vdots& \ldots& \vdots\\ \hat a_{k1}  & \ldots & \hat a_{kk} \end{matrix}\right)\not\equiv 0.
\]
By Kronecker's theorem (see Remark~\ref{Rem:rank-k-means-det}) the condition \eqref{Eq:rank-f-A} is  equivalent to
\begin{equation}\label{Eq:rank-as-det}
R_m(f_1\e_1+\ldots+f_d\e_d):=\det\left(\begin{matrix} f_1 & \ldots &f_k & f_m\\ \hat a_{11}& \ldots & \hat a_{1k}& \hat a_{1m}\\ \vdots& \ldots& \vdots& \vdots \\ \hat a_{k1}  & \ldots & \hat a_{kk}& \hat a_{km} \end{matrix}  \right)\equiv 0, \quad m=k+1,\ldots d.
\end{equation}
By Lemma~\ref{Lem:continuous-S-L},  $R_m:\Ha\to H^2(\D)$ is a continuous linear operator and $R_m \circ S= S_1 \circ R_m$, $m=k+1,\ldots d$. Hence,
\[
\mathcal R(A)=\bigcap_{m=k+1}^d R_m^{-1}(\{0\}),
\]
is a closed, $S$-invariant subspace of $\Ha$, and it is actually a determinantal space of the form $\mathcal Q_0$.
\end{proof}

Recalling from Remark~\ref{Rem:same-rank-A-A-hat} that $A$ and $\hat A$ have the same rank, we give the following:

\begin{definition}
Let $A\in\mathcal M_\infty(d\times d)$ and assume $\det A\equiv 0$ and $\hbox{rank}(A)=k\in\{1,\ldots, d-1\}$. We say that $J=\{j_1, \ldots, j_k: 1\leq j_1<\ldots < j_k\leq d\}$ is a {\sl good multi-index} of $\hat A$ if there exists $S=\{s_1, \ldots, s_k: 1\leq s_1<\ldots < s_k\leq d\}$ such that the minor $\hat A^{S,J}\in\mathcal M_\infty(k\times k)$ of $\hat A$ obtained from $\hat A$ by removing the rows $s\not\in S$ and the columns $j\not\in J$ has the property that $\det \hat A^{S,J}\not\equiv 0$.

The set of all indices $S$ which satisfy the previous condition with respect to $J$ is denoted by $\mathcal J_J(\hat A)$.
\end{definition}

Note that, by Remark~\ref{Rem:rank-k-means-det}, if $A$ has rank $k$ there exists at least one good multi-index for $\hat A$.

\begin{example}
Let
\[
A=\left(\begin{matrix}
\phi & 0 & 0 \\ 0 & \varphi & \varphi \\ 0 & 1 & 1
\end{matrix}\right),
\]
where $\phi,\varphi$ are non-invertible inner functions. Clearly, $A=\hat A$ and $\hbox{rank}(A)=2$. The good multi-indices of $\hat A$ are $J_1=\{1,2\}$ and $J_2=\{1,3\}$. Moreover, $\mathcal J_{J_1}(\hat A)=\mathcal J_{J_2}(\hat A)=\{\{1,2\}, \{1,3\}\}$. Also,
\[
\hat A^{\{1,2\}, J_1}=\hat A^{\{1,2\}, J_2}=\left(\begin{matrix}
\phi & 0 \\  0 & \varphi
\end{matrix}\right), \quad \hat A^{\{1,3\}, J_1}=\hat A^{\{1,3\}, J_2}=\left(\begin{matrix}
\phi & 0 \\  0 & 1
\end{matrix}\right).
\]
\end{example}

\begin{definition}
 Let $A\in\mathcal M_\infty(d\times d)$ and assume $\det A\equiv 0$ and $\hbox{rank}(A)=k\in\{1,\ldots, d-1\}$. Let $J$ be a good multi-index of $\hat A$. We let $\varphi_{J,A}$ to be the (inner function) greatest common divisor of the inner factors of $\det \hat A^{S,J}$, when $S$ varies in $\mathcal J_J(\hat A)$. That is,
 \[
 \varphi_{J,A}:=\hbox{g.c.d}\{I(\det \hat A^{S,J}):  S\in\mathcal J_J(\hat A)\}.
 \]
\end{definition}

Now we are ready to define a closed $S$-invariant subspace associated to $A$ in case $\det A\equiv 0$:

\begin{definition}\label{Def:NA-det0}
Let $A\in\mathcal M_\infty(d\times d)$ and assume $\det A\equiv 0$ and $\hbox{rank}(A)=k\in\{1,\ldots, d-1\}$. Let $J$ be a good multi-index of $\hat A$. We let
\[
\mathcal N_{A,J}:=\phi_A\left(  \left(\bigcap_{S\in \mathcal J_J(\hat A), j=1,\ldots, k}L^{-1}_{\hat A^{S,J}, j, J} (\varphi_{J,A}H^2(\D))\right)\cap \mathcal R(A)\right).
\]
\end{definition}

It follows at once from Lemma~\ref{Lem:continuous-S-L} and Lemma~\ref{lem:r(A)-S-closed-inv} that $\mathcal N_{A,J}$ is a closed $S$-invariant subspace of $\Ha$ for every good multi-index $J$ of $\hat A$. Also, note that, in the terminology of the Introduction,
\[
\mathcal N_{A,J}=\phi_A (\mathcal Q_{\varphi_{J,A}}\cap \mathcal Q_0).
\]

\begin{remark}
Let $A\in\mathcal M_\infty(d\times d)$ and assume $\det A\equiv 0$ and $\hbox{rank}(A)=k\in\{1,\ldots, d-1\}$. Let $J, J'$ be a good multi-indices of $\hat A$. We will show (see the proof of Theorem~\ref{Thm:subspace-includL}) that actually $\mathcal N_{A,J}=\mathcal N_{A,J'}$. In fact, we show that, for every $J$ good  multi-index of $\hat A$, we have
\[
\mathcal N_{A,J}=\overline{\hbox{span}\{ S^n(a_{11}\e_1+\ldots+a_{1d}\e_d),\ldots, S^n(a_{d1}\e_1+\ldots+ a_{dd}\e_d): n\in \N\}}^\Ha.
\]
\end{remark}

\begin{example} Let
\[
A=\left(\begin{matrix} \phi &0 & 0\\ 0& \varphi & 0 \\ 0  &  0& 0 \end{matrix}  \right),
\]
with $\varphi, \phi$ two non-invertible inner functions, coprime. Hence, $A=\hat A$, $\hbox{rank}(A)=2$ and the (only) good multi-index is $J=\{1,2\}$. Also, $\mathcal J_J(\hat A)=\{J\}$. We have $\varphi_{J,A}=\phi\varphi$. Hence,
\begin{equation*}
\begin{split}
L^{-1}_{\hat A^{J,J}, 1, J} (\phi\varphi H^2(\D))&=\{f_1\e_1+f_2\e_2+f_3\e_3\in \Ha: f_1\varphi \in\phi\varphi H^2(\D)\}\\&=\{f_1\e_1+f_2\e_2+f_3\e_3\in \Ha: f_1 \in\phi H^2(\D)\},
\end{split}
\end{equation*}
and, similarly,
\begin{equation*}
L^{-1}_{\hat A^{J,J}, 2, J} (\phi\varphi H^2(\D))=\{f_1\e_1+f_2\e_2+f_3\e_3\in \Ha: f_2 \in\varphi H^2(\D)\}.
\end{equation*}
Also, by \eqref{Eq:rank-as-det},
\[
\mathcal R(A)=H^2(\D)\e_1\oplus H^2(\D)\e_2,
\]
so that
\[
\mathcal N_ A=\phi H^2(\D)\e_1\oplus \varphi H^2(\D)e_2.
\]
\end{example}

\begin{example} Let
\[
A=\left(\begin{matrix} 1 &0 & 0\\ 0& 0 & 0 \\ 0  &  0& 0 \end{matrix}  \right).
\]
 Hence, $A=\hat A$, $\hbox{rank}(A)=1$ and the (only) good multi-index is $J=\{1\}$. Also, $\mathcal J_J(\hat A)=\{J\}$. We have $\varphi_{J,A}=1$. Hence,
\begin{equation*}
L^{-1}_{\hat A^{J,J}, 1, J} (H^2(\D))=\{f_1\e_1+f_2\e_2+f_3\e_3\in \Ha: f_1\in H^2(\D)\}=\Ha.
\end{equation*}
Also, by \eqref{Eq:rank-as-det},
\[
\mathcal R(A)=H^2(\D)\e_1,
\]
so that
\[
\mathcal N_ A= H^2(\D)\e_1.
\]
\end{example}

\section{The spaces $\mathcal N_A$ and $\mathcal N_{J,A}$ are not maximal for the shift}

In this section we prove an analogue of Lemma~\ref{Lem:dim-one} for the spaces defined in the previous section.

\begin{proposition}\label{Prop:NA-no-maximal}
Let $A\in\mathcal M_\infty(d\times d)$, $\det A\equiv 0$. Suppose $A$ is not the identically zero matrix. Let $J$ be a good multi-index of $\hat A$. Then $\dim (\mathcal N_{A,J})^\perp=\infty$ and there exists a closed $S$-invariant subspace $M\subsetneq \Ha$ such that $\mathcal N_{A,J}\subsetneq M$.
\end{proposition}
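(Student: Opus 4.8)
The plan is to exploit the key identity announced in the remark preceding the statement, namely that for any good multi-index $J$ of $\hat A$ one has
\[
\mathcal N_{A,J}=\overline{\hbox{span}\{ S^n(a_{j1}\e_1+\ldots+a_{jd}\e_d): j=1,\ldots,d,\ n\in \N\}}^\Ha,
\]
so that $\mathcal N_{A,J}$ is the smallest closed $S$-invariant subspace containing the $d$ ``rows'' $v_j:=a_{j1}\e_1+\ldots+a_{jd}\e_d$ of $A$. (If that identity is only formally proved later in Theorem~\ref{Thm:subspace-includL}, I would here prove directly the one inclusion I actually need, that $\mathcal N_{A,J}$ is contained in that span-closure, which is the easy direction: each $v_j$ lies in $\mathcal N_{A,J}$ because its rows-with-$v_j$ determinant vanishes and the determinantal conditions $L_{\hat A^{S,J},i,J}(v_j)\in\varphi_{J,A}H^2$ hold by definition of $\varphi_{J,A}$ as a g.c.d. of those very determinants; hence the span-closure is contained in $\mathcal N_{A,J}$, and conversely any element of $\mathcal N_{A,J}$ satisfies rank and divisibility constraints forcing it into the span — but for non-maximality I really only need to produce one strictly larger proper invariant subspace, so a single convenient inclusion suffices.)

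First I would produce the enlargement. Since $A$ is not the zero matrix, pick an entry $a_{jk}\not\equiv 0$; write $a_{jk}=I(a_{jk})O(a_{jk})$ and recall $\phi_A$ divides $I(a_{jk})$, so $\hat a_{jk}=a_{jk}/\phi_A\in H^\infty(\D)$ is not identically zero. The subspace $\mathcal N_{A,J}$ has the form $\phi_A\,\mathcal Q$ for a determinantal space $\mathcal Q\subseteq\Ha$, and the multiplication-by-$\phi_A$ operator is an isometry of $\Ha$; so it suffices to enlarge $\mathcal Q$. I would take $M:=\phi_A\,(\mathcal Q+\text{span of one extra cyclic generator})$ — concretely, replace each $\hat a_{ij}$ by $\hat a_{ij}/I(\hat a_{ij}^{(0)})$ for a suitable common inner factor, i.e. strip one non-constant inner factor that is common to the relevant determinants $\det\hat A^{S,J}$, exactly as Lemma~\ref{Lem:dim-one} strips a non-constant inner divisor of $\varphi$. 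More precisely: if $\varphi_{J,A}$ is non-constant, apply Lemma~\ref{Lem:dim-one} to get $\tilde\varphi$ with $\varphi_{J,A}H^2\subsetneq\tilde\varphi H^2\subsetneq H^2$, and set $M:=\phi_A(\bigcap_{S,i}L^{-1}_{\hat A^{S,J},i,J}(\tilde\varphi H^2)\cap\mathcal R(A))$; if $\varphi_{J,A}$ is constant, then instead strip a non-constant common inner factor from $\phi_A$ itself (which exists since $A\ne0$ and $\dim N^\perp\ge2$ will be available, or directly: if $\phi_A$ is also constant one argues the rows $v_j$ span a proper subspace because $\hat A$ has rank $k<d$, so $\mathcal N_{A,J}\subseteq\mathcal R(A)=\mathcal Q_0\subsetneq\Ha$ is already proper and one enlarges using the $\mathcal Q_0$ part). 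In every case $M$ is closed and $S$-invariant by Corollary~\ref{Cor:determinantal-space-S} / Lemma~\ref{lem:r(A)-S-closed-inv}, and $\mathcal N_{A,J}\subseteq M$ is immediate from $\varphi_{J,A}H^2\subseteq\tilde\varphi H^2$. The strictness $\mathcal N_{A,J}\subsetneq M$ and properness $M\subsetneq\Ha$ I would check by exhibiting, using Lemma~\ref{Lem:coprime-corona}, an element of $\tilde\varphi H^2$ not in $\varphi_{J,A}H^2$ pulled back appropriately, and observing $\mathcal R(A)\ne\Ha$ since $\hbox{rank}\hat A=k<d$.

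Second, the infinite-codimension claim $\dim(\mathcal N_{A,J})^\perp=\infty$. Here I would use that $\mathcal N_{A,J}\subseteq\phi_A\,\mathcal R(A)$, and $\mathcal R(A)$ is cut out by the vanishing of the operators $R_m$, $m=k+1,\ldots,d$, of Lemma~\ref{lem:r(A)-S-closed-inv}, with at least one such $m$ present since $k\le d-1$. For that $m$, every $F=\sum f_i\e_i\in\mathcal R(A)$ satisfies $\det$ of the $(k+1)\times(k+1)$ matrix with top row $(f_1,\ldots,f_k,f_m)$ being zero; by the Laplace expansion \eqref{Eq:L-determ-Laplace} this is a fixed nontrivial $H^\infty$-linear relation $\sum_i c_i f_i\equiv0$ among the coordinates (with not all $c_i\equiv0$, since $\det\hat A^{S,J}\not\equiv0$ for the chosen minor). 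Thus $\mathcal R(A)$ is contained in the kernel of the bounded operator $F\mapsto\sum c_i f_i$ into $H^2(\D)$, whose range is $S_1$-invariant and nonzero, hence of the form $\psi H^2(\D)$ with $\dim(\psi H^2)^\perp$ possibly finite but the point is the kernel has infinite codimension: indeed $R_m$ is surjective onto a dense-in-its-closure subspace and its closed range is an infinite-dimensional $S_1$-invariant subspace (being $\psi H^2(\D)$ for a non-zero $\psi\in H^\infty$), so $\dim(\ker R_m)^\perp=\dim\overline{\hbox{ran}\,R_m}=\infty$; since $\mathcal N_{A,J}\subseteq\ker R_m$ after the isometry $\phi_A$, we get $\dim(\mathcal N_{A,J})^\perp\ge\dim(\ker R_m)^\perp=\infty$. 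The main obstacle is the bookkeeping in the case split — ensuring that in all sub-cases ($\varphi_{J,A}$ constant or not, $\phi_A$ constant or not) one genuinely lands strictly between $\mathcal N_{A,J}$ and $\Ha$ — and verifying that the row-generated-span identity, or at least the inclusion I use, holds without circularly invoking Theorem~\ref{Thm:subspace-includL}.
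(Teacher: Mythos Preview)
Your infinite-codimension argument is fine and essentially matches the paper's: you observe that $\mathcal N_{A,J}\subseteq\phi_A\mathcal R(A)$ and that the operator $R_m$ of Lemma~\ref{lem:r(A)-S-closed-inv} has nonzero (hence infinite-dimensional) range in $H^2(\D)$, so $\ker R_m$ has infinite codimension. The paper does the same thing by exhibiting the injection $\theta\mapsto[\theta\e_d]$ into $\Ha/\mathcal R(A)$.

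The enlargement argument, however, has a genuine gap. Your strategy is to relax $\varphi_{J,A}$ to a proper divisor $\tilde\varphi$ (or analogously relax $\phi_A$), but you never verify that this relaxation \emph{actually enlarges} $\mathcal N_{A,J}$. The difficulty is that the determinantal constraints $L_{\hat A^{S,J},j,J}(F)\in\varphi_{J,A}H^2(\D)$ may be entirely redundant once $F\in\mathcal R(A)$, in which case $\mathcal N_{A,J}=\phi_A\mathcal R(A)$ and your proposed $M$ equals $\mathcal N_{A,J}$. Concretely, take $d=2$ and $A=\hat A=\begin{pmatrix}1&0\\0&0\end{pmatrix}$: here $\phi_A$ and $\varphi_{J,A}$ are both constant, $\mathcal N_{A,J}=\mathcal R(A)=H^2(\D)\e_1$, and there is nothing to relax. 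Your sentence ``one enlarges using the $\mathcal Q_0$ part'' acknowledges that the real task is to enlarge $\mathcal R(A)$ itself, but gives no mechanism for doing so.

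This is precisely where the paper does the work you are missing. The paper first disposes of the easy case $\mathcal N_{A,J}\subsetneq\mathcal R(A)$ by taking $M=\mathcal R(A)$, and then, assuming $\mathcal N_{A,J}=\mathcal R(A)$, it \emph{bumps the rank}: it inserts an extra row $(0,\ldots,0,\theta)$ with $\theta$ inner to form a matrix $B$ of rank $k+1$, and takes $M=\mathcal R(B)$ when $k+1<d$, or a suitable determinantal space built from $B$ when $k+1=d$ (with $\theta$ chosen coprime to $I(\det\hat A^{J,J})$). The verifications that $\mathcal R(A)\subsetneq M\subsetneq\Ha$ are then explicit determinant computations. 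This rank-bumping idea is the missing ingredient in your plan.

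A secondary issue: you invoke Lemma~\ref{Lem:dim-one} for $\varphi_{J,A}$, but that lemma needs $\dim(\varphi_{J,A}H^2(\D))^\perp\geq 2$, whereas you only assume $\varphi_{J,A}$ non-constant. This is fixable, but only once the main gap above is addressed. Finally, the span-closure identity from the remark is a red herring here: neither your actual argument nor the paper's uses it, and you correctly sense that invoking it would be circular.
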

\begin{proof}
Let $A=(a_{jm})$, $j,m\in \{1,\ldots, d\}$ and $a_{jm}\in H^\infty(\D)$.

Let $k\in\{1,\ldots, d-1\}$ be the rank of $A$--and hence of $\hat A$. In order to simplify notation, and without loss of generality, we can assume that $J=\{1,\ldots, k\}$ is a good multi-index of $\hat A$ and that $J\in \mathcal J_J(\hat A)$. Thus, $\det \hat A^{J,J}\not\equiv 0$.

Now, for every $\theta\in H^2(\D)$,
\begin{equation}\label{Eq:change-matrix-B}
\det\left(\begin{matrix} 0 & \ldots &0 & \theta\\ \hat a_{11}& \ldots & \hat a_{1k}& \hat a_{1d}\\ \vdots& \ldots& \vdots& \vdots \\ \hat a_{k1}  & \ldots & \hat a_{kk}& \hat a_{kd} \end{matrix}  \right)=(-1)^{k} \theta\det \hat A^{J,J}.
 \end{equation}
In particular, if $\theta\not\equiv 0$, by \eqref{Eq:rank-as-det},  $\theta\e_d\not\in \mathcal R(A)$ and,  hence $\mathcal R(A)\neq\Ha$. Note that this also implies that $\dim (\Ha/\mathcal R(A))=\infty$---hence, $\dim (\mathcal R(A))^\perp=\infty$ and hence $\dim (\mathcal N_{A,J})^\perp=\infty$---because $[\theta \e_d]=[0]$ in $\Ha/\mathcal R(A)$ if and only if $\theta\equiv 0$ and thus $H^2(\D)\ni \theta\mapsto [\theta \e_d]\in \Ha/\mathcal R(A)$ is a linear injective operator.

We know that $\mathcal N_{A,J}\subseteq \phi_A \mathcal R(A)$ and, clearly, $\phi_A \mathcal R(A)\subseteq  \mathcal R(A)$. Thus, if $\mathcal N_{A,J}\neq \mathcal R(A)$, we can take $M=\mathcal R(A)$ and we are done since $\mathcal R(A)$ is a closed $S$-invariant subspace of $\Ha$. Therefore, we can assume that
\[
\mathcal N_{A,J}=\mathcal R(A).
\]
Now, let  $\theta$ be an inner function and consider the matrix $B\in \mathcal M_\infty(d\times d)$ given by
\[
B:=\left(\begin{matrix} 0 & \ldots &0 & \theta\\ \hat a_{11}& \ldots & \hat a_{1, d-1}& \hat a_{1d}\\ \vdots& \ldots& \vdots& \vdots \\ \hat a_{d-1,1}  & \ldots & \hat a_{d-1, d-1}& \hat a_{d-1,d} \end{matrix}  \right).
\]
By \eqref{Eq:change-matrix-B}, the rank of $B$ is $k+1$.

\medskip

{\sl 1. Case  $k+1<d$.}

\medskip

We claim that $\mathcal R(A)\subsetneq \mathcal R(B)\subsetneq \Ha$. If this is so, taking into account that $\mathcal R(B)$ is closed and $S$-invariant,  we can take $M=\mathcal R(B)$ and we are done. Indeed, arguing as before, we see that $\mathcal R(B)\subsetneq \Ha$. On the other hand, let $f_1\e_1+\ldots+f_d\e_d\in\mathcal R(A)$. By \eqref{Eq:rank-as-det}, we have  for $m=k+1,\ldots, d-1$,
\[
\det\left(\begin{matrix} f_1& \ldots & f_k & f_m & f_d \\ 0 & \ldots &0 & 0& \theta\\ \hat a_{11}& \ldots & \hat a_{1k}& \hat a_{1m} & \hat a_{1d}\\ \vdots& \ldots& \vdots& \vdots \\ \hat a_{k1}  & \ldots & \hat a_{kk}& a_{km}& \hat a_{k,d} \end{matrix}  \right)=(-1)^{k}\theta\det \left(\begin{matrix} f_1& \ldots & f_k & f_m \\ \hat a_{11}& \ldots & \hat a_{1k}& \hat a_{1m} \\ \vdots& \ldots& \vdots& \vdots \\ \hat a_{k1}  & \ldots & \hat a_{kk}& a_{km}\end{matrix}  \right)\equiv 0,
\]
which, by \eqref{Eq:change-matrix-B} and Kronecker's Theorem (see Remark~\ref{Rem:rank-k-means-det}) implies that
\[
\hbox{rank}\left(\begin{matrix} f_1& \ldots  & f_{d-1} & f_d \\ 0 & \ldots & 0& \theta\\ \hat a_{11}& \ldots &  \hat a_{1, d-1} & \hat a_{1d}\\ \vdots& \ldots& \vdots& \vdots \\ \hat a_{d-1,1}  & \ldots &  a_{d-1, d-1}& \hat a_{d-1,d} \end{matrix}  \right)= k+1,
\]
that is, $f_1\e_1+\ldots+f_d\e_d\in\mathcal R(B)$. Hence, $\mathcal R(A)\subset \mathcal R(B)$. It is clear that $\mathcal R(A)\neq \mathcal R(B)$, since $\theta \e_d\in \mathcal R(B)$ but $\theta \e_d\not\in \mathcal R(A)$.

\medskip

{\sl 1. Case  $k+1=d$.}

\medskip

Since $k=d-1$, we have that $J=\{1,\ldots, d-1\}$ (and we are assuming $\det \hat A^{J,J}\not\equiv 0$). Let $\theta$ be a non-invertible inner function coprime   with the inner part $I(\det \hat A^{J,J})$ of $\det \hat A^{J,J}$ (for instance, if $I(\det \hat A^{J,J})(z_0)\neq 0$ and for some $z_0\in\D$, we can take $\theta(z)=\frac{z_0-z}{1-\overline{z_0}z}$, $z\in \D$). Let
\[
M:=\bigcap_{j=1}^d L_{B, j}^{-1}(\theta H^2(\D)).
\]
Note that $M$ is a determinantal space, so it is closed and $S$-invariant. We have
\[
L_{B, 1}(\e_d)=\det\left(\begin{matrix} 0 & \ldots &0 & 1\\ \hat a_{11}& \ldots & \hat a_{1, d-1}& \hat a_{1d}\\ \vdots& \ldots& \vdots& \vdots \\ \hat a_{d-1,1}  & \ldots & \hat a_{d-1, d-1}& \hat a_{d-1,d} \end{matrix}  \right)=(-1)^{d+1}\det \hat A^{J,J}.
\]
Taking into account that  $\theta$ is coprime with $I(\det \hat A^{J,J})$,  it follows that $L_{B, 1}(\e_d)\not\in \theta H^2(\D)$. Thus, $M\neq \Ha$.

Clearly, $\theta \e_d\in M$, since $L_{B,j}(\theta\e_d)\equiv 0$ for $j=2,\ldots, d$ and
\[
L_{B,1}(\theta \e_d)=(-1)^{d+1}\theta\det \hat A^{J,J}\in \theta H^2(\D).
\]
While, as we already noticed, $\theta\e_d\not\in \mathcal R(A)$. Therefore, $M\neq \mathcal R(A)$. So, in order to complete the proof, we are left to show that
\[
\mathcal R(A)\subset M.
\]
Let $f_1\e_1+\ldots+f_d\e_d\in\mathcal R(A)$. Hence, by Kronecker's Theorem (see Remark~\ref{Rem:rank-k-means-det}), $L_{B,1}(f_1\e_1+\ldots+f_d\e_d)=0$. While, for $j\in\{2,\ldots, d\}$ we have
\[
L_{B,j}(f_1\e_1+\ldots+f_d\e_d)=(-1)^{d+1}\theta\det\left(\begin{matrix}\hat a_{11} & \ldots &\hat a_{1,d-1}\\ \vdots& \vdots& \vdots \\ \hat a_{j-1,1} & \ldots &\hat a_{j-1,d-1}\\ f_1 & \ldots & f_{d-1}\\ \hat a_{j+1,1} & \ldots &\hat a_{j+1,d-1}\\  \vdots& \vdots& \vdots \\ \hat a_{d-1,1} & \ldots &\hat a_{d-1,d-1} \end{matrix} \right)\in\theta H^2(\D).
\]
Therefore, $f_1\e_1+\ldots+f_d\e_d\in M$, and we are done.
\end{proof}

\begin{proposition}\label{Prop:NA-no-maximal2}
Let $A\in\mathcal M_\infty(d\times d)$, $\det A\not\equiv 0$ and suppose that $\dim (\mathcal N_A)^\perp\geq 2$. Then there exists a closed $S$-invariant subspace $M\subsetneq \Ha$ such that $\mathcal N_A\subsetneq M$.
\end{proposition}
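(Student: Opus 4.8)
The plan is to mimic the structure of the proof of Proposition~\ref{Prop:NA-no-maximal}, but now exploiting the fact that when $\det A\not\equiv 0$ the space $\mathcal N_A$ is governed by the single inner function $\varphi_A=I(\det\hat A)$ together with the inner gcd $\phi_A$, rather than by a rank condition. Recall $\mathcal N_A=\phi_A\big(\bigcap_{j=1}^d L_{\hat A,j}^{-1}(\varphi_A H^2(\D))\big)$, and since multiplication by the inner function $\phi_A$ is an isometry with $\phi_A\Ha$ a closed $S$-invariant subspace, it suffices to produce a closed $S$-invariant $M_0$ with $\mathcal Q_{\varphi_A}:=\bigcap_{j=1}^d L_{\hat A,j}^{-1}(\varphi_A H^2(\D))\subsetneq M_0\subsetneq\Ha$ and then set $M=\phi_A M_0$ (noting $\phi_A\Ha\supseteq\mathcal N_A$ gives the inclusion, and $M\subsetneq\Ha$ unless $\phi_A$ is invertible, a case one treats by taking $M_0\subsetneq\Ha$ directly). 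So the real content is: \emph{if $\dim(\mathcal Q_{\varphi_A})^\perp\ge 2$ up to the $\phi_A$-twist, enlarge $\mathcal Q_{\varphi_A}$.}

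First I would dispose of the case where the hypothesis $\dim(\mathcal N_A)^\perp\ge 2$ forces $\varphi_A$ to be non-invertible with $\dim(\varphi_A H^2(\D))^\perp\ge 2$, or else $\phi_A$ non-invertible, or else $\mathcal Q_{\varphi_A}\subsetneq\Ha$ strictly on some coordinate. Concretely: if $\mathcal Q_{\varphi_A}=\Ha$ then $\mathcal N_A=\phi_A\Ha$, whose orthocomplement has dimension $\ge 2$ only if $\dim(\phi_A H^2(\D))^\perp\ge 2$; then Lemma~\ref{Lem:dim-one} supplies a nonconstant inner $\tilde\phi$ dividing $\phi_A$ with $\phi_A H^2(\D)\subsetneq\tilde\phi H^2(\D)$, and $M=\tilde\phi\Ha$ works (it is $S$-invariant, properly contains $\phi_A\Ha=\mathcal N_A$, and is proper since $\tilde\phi$ is nonconstant). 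In the remaining case $\mathcal Q_{\varphi_A}\subsetneq\Ha$. If $\varphi_A$ is non-invertible, I would use Lemma~\ref{Lem:dim-one} to pick a nonconstant inner $\tilde\varphi\mid\varphi_A$ with $\varphi_A H^2(\D)\subsetneq\tilde\varphi H^2(\D)\subsetneq H^2(\D)$, and set $M_0:=\bigcap_{j=1}^d L_{\hat A,j}^{-1}(\tilde\varphi H^2(\D))$, $M:=\phi_A M_0$. Since $L_{\hat A,j}^{-1}(\varphi_A H^2(\D))\subseteq L_{\hat A,j}^{-1}(\tilde\varphi H^2(\D))$ for every $j$, we get $\mathcal Q_{\varphi_A}\subseteq M_0$, hence $\mathcal N_A\subseteq M$; that $M$ is closed and $S$-invariant follows from Corollary~\ref{Cor:determinantal-space-S} (it is a determinantal space twisted by an inner function). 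It remains to check $\mathcal N_A\neq M$ and $M\neq\Ha$; I expect these to follow from the Laplace-expansion identity \eqref{Eq:L-determ-Laplace}: evaluating the $L_{\hat A,j}$ on suitable standard basis vectors $\e_k$ produces the cofactors $\det(\hat A^{jk})$, and since $\det\hat A\not\equiv 0$ some maximal cofactor/minor combination is $\not\equiv 0$ (Cramer), so one can detect membership in $\tilde\varphi H^2(\D)$ but not $\varphi_A H^2(\D)$, and detect failure of membership in $\tilde\varphi H^2(\D)$ for an appropriate vector, exactly as in the $k+1=d$ case of Proposition~\ref{Prop:NA-no-maximal}.

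The last case is $\varphi_A$ invertible (so $\mathcal Q_{\varphi_A}=\bigcap_j L_{\hat A,j}^{-1}(H^2(\D))=\Ha$ — wait, this collapses to the first case) — so in fact once $\varphi_A$ is invertible and $\mathcal Q_{\varphi_A}=\Ha$, we are back to the $\phi_A$-argument above. The genuinely delicate point, and the one I'd expect to be the main obstacle, is verifying the \emph{strictness} $\mathcal N_A\subsetneq M$ and $M\subsetneq\Ha$ in the middle case: one must exhibit explicit elements of $\Ha$ separating the three spaces, and this requires knowing which cofactor minors of $\hat A$ are nonzero and choosing the test functions (inner functions built from point masses, as in the parenthetical remark in Proposition~\ref{Prop:NA-no-maximal}) to be coprime with the relevant inner parts $I(\det\hat A^{jk})$. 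I would handle this by the same coprimality trick: pick $z_0\in\D$ with $\det\hat A(z_0)\neq 0$, note all relevant minors are then controllable near $z_0$, and use $\theta(z)=\frac{z_0-z}{1-\overline{z_0}z}$ or a Blaschke product adapted to $\varphi_A$ to build the separating vectors, invoking Lemma~\ref{Lem:coprime-corona} if a gcd computation is needed to confirm that the enlarged space is still proper. The rest is routine bookkeeping with determinants and the isometry property of inner multiplication.
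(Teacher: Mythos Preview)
Your reduction to ``enlarge $\mathcal Q_{\varphi_A}$'' breaks down at the very first step. You assert that it suffices to find $M_0$ with $\mathcal Q_{\varphi_A}\subsetneq M_0\subsetneq\Ha$, but $\mathcal Q_{\varphi_A}$ can have codimension at most one even when $\dim(\mathcal N_A)^\perp\ge 2$, because the extra codimension may come entirely from $\phi_A$. Concretely, take $d=2$ and $A=\begin{pmatrix} z & 0\\ 0 & z^2\end{pmatrix}$. Then $\phi_A=z$, $\hat A=\begin{pmatrix}1&0\\0&z\end{pmatrix}$, $\varphi_A=z$, and one computes $\mathcal Q_{\varphi_A}=H^2(\D)\oplus zH^2(\D)$, which has codimension one, so no $M_0$ as you describe exists; moreover $\dim(\varphi_AH^2(\D))^\perp=1$, so Lemma~\ref{Lem:dim-one} does not apply to $\varphi_A$ and your case (b) argument cannot start. (Your case (a) also has a slip: $\dim(\phi_A\Ha)^\perp=d\cdot\dim(\phi_A H^2(\D))^\perp$, so for $d\ge2$ the hypothesis does not force $\dim(\phi_A H^2(\D))^\perp\ge2$.) The paper avoids this by a different reduction: it first passes from $\mathcal N_A$ to a \emph{single} brick $L_{\hat A,d}^{-1}(\varphi_A H^2(\D))$, taking that brick itself as $M$ if it strictly contains $\mathcal N_A$; only in the residual case $\mathcal N_A=L_{\hat A,d}^{-1}(\varphi_A H^2(\D))$ (which forces $\phi_A$ invertible) does it analyze further.

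Even granting the reduction, your treatment of strictness is not routine bookkeeping. The paper does not simply replace $\varphi_A$ by an arbitrary divisor $\tilde\varphi$ from Lemma~\ref{Lem:dim-one}; instead it introduces, for each column $j$, the inner function $\theta_j$ obtained by dividing $\varphi_A$ by $\gcd(\varphi_A,I(\det\hat A^{J,S_j}))$, and then splits into two sub-cases. When some $\theta_j$ has $\dim(\theta_jH^2(\D))^\perp\ge2$, one enlarges along a divisor of that particular $\theta_j$ (which is automatically coprime to the relevant cofactor, making both strict inclusions checkable on $\e_j$). But when every $\theta_j$ is invertible or a single Blaschke factor, this route is blocked, and the paper instead observes that $Z:=\bigoplus_j\theta_jH^2(\D)\subseteq \mathcal N_A$ has finite codimension, hence $\dim(\mathcal N_A)^\perp<\infty$, and concludes by the elementary fact that $S^\ast$ restricted to a finite-dimensional invariant subspace of dimension $\ge2$ has a nontrivial invariant subspace. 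This finite-dimensional endgame is a genuinely different mechanism that your coprimality sketch does not capture.
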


\begin{proof}
Let
\[
\mathcal Q_{\varphi_A}:=\bigcap_{j=1}^d L_{\hat A, j}^{-1}(\varphi_A H^2(\D)).
\]
 By the very definition, $\mathcal N_A:=\phi_A \mathcal Q_{\varphi_A}$.

 Suppose first that $\mathcal Q_{\varphi_A}=\Ha$. Hence, $\mathcal N_A=\phi_A \Ha$. Since we are assuming that $\dim (\mathcal N_A)^\perp\geq 2$, it follows that $\phi_A$ is not invertible. If $d=1$, the result follows from Lemma~\ref{Lem:dim-one}, so we can assume $d\geq 2$. In this case, let
 \[
 M:=\left(\phi_A H^2(\D)\right)\e_1\oplus H^2(\D)\oplus\ldots\oplus H^2(\D).
 \]
Clearly $M$ is closed, $S$-invariant, $\phi_A \Ha\subsetneq M\subsetneq\Ha$, and we are done.

Therefore, we can assume that $\mathcal Q_{\varphi_A}\neq\Ha$. Hence, there exists $j$ such that $L_{\hat A, j}^{-1}(\varphi_A H^2(\D))\neq \Ha$. Without loss of generality, we can assume $j=d$. Since $\mathcal N_A\subseteq \phi_A L_{\hat A, d}^{-1}(\varphi_A H^2(\D))$ and clearly $\phi_A L_{\hat A, d}^{-1}(\varphi_A H^2(\D) \subseteq L_{\hat A, d}^{-1}(\varphi_A H^2(\D))$, if $\mathcal N_A\neq L_{\hat A, d}^{-1}(\varphi_A H^2(\D))$, we can take $M=L_{\hat A, d}^{-1}(\varphi_A H^2(\D))$. By Corollary~\ref{Cor:determinantal-space-S}, $M$ is closed and $S$-invariant, and we are done.

Therefore, we are left to assume
\[
\mathcal N_A=L_{\hat A, d}^{-1}(\varphi_A H^2(\D)).
\]

Fix $j\in \{1,\ldots, d\}$ and let $S_j=\{1,\ldots, d\}\setminus\{j\}$. Let $J=\{1,\ldots, d-1\}$.  Also, let $\tilde\theta_j$ be the (inner function) greatest common divisor of $\varphi_A$ and the inner part $I(\det \hat A^{J,S_j})$ of $\det \hat A^{J,S_j}$. Let  $\theta_j$ be an inner function (unique up to multiplication by a unimodular constant) such that
\begin{equation}\label{Eq:varphi-divisors}
\varphi_A=\tilde{\theta_j}\theta_j.
\end{equation}
 Hence
\begin{equation}\label{Eq:e-j-in-L}
\begin{split}
L_{\hat A, d}(\theta_j\e_j)&=\det\left(\begin{matrix}\hat a_{11} &\ldots & \hat a_{1,j-1} &  \hat a_{1,j} & \hat a_{1,j+1}  &\ldots & \hat a_{1,d}\\   \vdots &\vdots  & \vdots  &  \vdots \\ \hat a_{d-1,1} &\ldots & \hat a_{d-1,j-1} & \hat a_{d-1,j} & \hat a_{d-1,j+1} & \ldots & \hat a_{d-1,d}\\ 0 & \ldots & 0&\theta_j & 0&\ldots& 0\end{matrix}\right)\\&=(-1)^{d+j}\theta_j \det \hat A^{J,S_j}\in \varphi_AH^2(\D).
\end{split}
\end{equation}
This in particular implies that
\begin{equation}\label{Z-inside}
Z:=\theta_1 H^2(\D)\oplus\ldots \oplus \theta_d H^2(\D)\subseteq L_{\hat A, d}^{-1}(\varphi_A H^2(\D)).
\end{equation}

\medskip
{\sl Case 1.} There exists $j\in\{1,\ldots, m\}$ such that $\theta_j$ is neither invertible nor a Blaschke product with a simple pole.
\medskip

We can assume $j=1$, the other cases being similar. In this case, by Lemma~\ref{Lem:dim-one}, there exists a not invertible inner function $\varphi_1$ such that $\varphi_1$ divides $\theta_1$ and $\theta_1$ does not divide $\varphi_1$. Let
\[
M:=L_{\hat A, d}^{-1}(\varphi_1 H^2(\D)).
\]
Note that $M$ is closed and $S$-invariant. Since $\varphi_1$ divides $\theta_1$, and hence, by \eqref{Eq:varphi-divisors}, it divides $\varphi_A$, it follows easily that $L_{\hat A, d}^{-1}(\varphi_A H^2(\D))\subset M$. Moreover, since for every $h\in H^2(\D)$,
\[
L_{\hat A, d}(h\e_1)=(-1)^{d+1}h \det \hat A^{J,S_1},
\]
and since $\varphi_1$ and $I(\det \hat A^{J,S_1})$ are coprime, it follows that $\varphi_1\e_1\in M$ but $\varphi_1\e_1\not\in L_{\hat A, d}^{-1}(\varphi_A H^2(\D))$, and that $\e_1\not\in M$. That is, $M\neq \Ha$ and $M$ properly contains $L_{\hat A, d}^{-1}(\varphi_A H^2(\D))$.

\medskip
{\sl Case 2.} For all $j\in\{1,\ldots, m\}$ either $\theta_j$ is  a Blaschke product with a simple pole or $\theta_j$ is invertible.
\medskip

Since $L_{\hat A, d}^{-1}(\varphi_A H^2(\D))\neq\Ha$, there exists at least one $j$ such that $\theta_j$ is not invertible. To simplify readability, we  assume that $\theta_1,\ldots, \theta_m$ are Blaschke products with a simple pole and $\theta_{m+1},\ldots, \theta_d$ are invertible, for some $1\leq m\leq d$ (the other cases are similar).

Let $Q:=L_{\hat A, d}^{-1}(\varphi_A H^2(\D))$. By \eqref{Z-inside}, $Z\subseteq Q$. Since $Z$ is an orthogonal direct sum, it is easy to see that there is a natural isometrical isomorphim between $\Ha/Z$ and
\[
E:=\left(H^2(\D)/(\theta_1H^2(\D))\right)\e_1+\ldots+\left(H^2(\D)/(\theta_m H^2(\D))\right)\e_m,
\]
given by
\[
E\ni [f_1]\e_1+\ldots+[f_m]\e_m\mapsto [f_1\e_1+\ldots+f_m\e_m]\in \Ha/Z.
\]
 Since $\theta_j$ is a Blaschke product with a simple pole  it follows that $H^2(\D)/(\theta_jH^2(\D))$ is a one-dimensional space for $j=1,\ldots, m$,. Hence, $\dim \Ha/Z=\dim E=m$. Taking into account that $\Ha/Z$ is isometrically isomorphic to $Z^\perp$, it follows that $\dim Z^\perp=m$.

From $Z\subseteq Q$, we have that $Q^\perp\subseteq Z^\perp$, hence, $\dim Q^\perp\leq m$. Note that $Q^\perp\neq\{0\}$ (because $Q\neq \Ha$) and $Q^\perp$ is $S^\ast$-invariant (because $Q$ is $S$-invariant).

Therefore, $S^\ast|_{Q^\perp}:Q^\perp\to Q^\perp$ is a linear endomorphism of a finite dimensional space and $2\leq \dim Q^\perp\leq m$. Hence,  there exists a subspace $V\neq\{0\}$ such that $V\subsetneq Q^\perp$ and $S^\ast(V)\subseteq V$. Given such a $V$, we let $M=V^\perp$. Thus, $M$ is a closed $S$-invariant subspace of $\Ha$, $Q\subsetneq M$ and $M\neq\Ha$.
\end{proof}

\section{The Beurling-Lax matrix of an invariant subspace}\label{Sec:Beurling-Lax}

Let $M\subset \Ha$ be a closed $S$-invariant subspace. By the Beurling-Lax Theorem \cite{Beu, Lax} (see also, {\sl e.g.} \cite[Section~1.12]{RoRo}), there exists a bounded linear operator $Q:\Ha \to \Ha$ (called an {\sl $S$-inner operator}) such that:
\begin{enumerate}
\item  there is a closed $S$-invariant orthogonal decomposition $\Ha=I_Q\stackrel{\perp}{\oplus} \ker Q$,
\item $A|_{I_Q}: I_Q\to Q(I_Q)=Q(\Ha)=M$ is an isometry,
\item $Q \circ S= S\circ Q$.
\end{enumerate}

The following proposition can be deduced also from \cite[Theorem~B Section~1.15]{RoRo}, but we give a proof both for the sake of completeness and in order to fix some notations.

\begin{proposition}\label{Prop:structure-invariant-shif-space}
Let $M\subset \Ha$ be a closed $S$-invariant subspace. Then there exist $a_{jk}\in  H^\infty(\D)$,  with $\|a_{jk}\|_\infty\leq 1$, $j,k=1,\ldots, k$,  such that
\begin{equation*}
\begin{split}
M&=\left\{\sum_{j=1}^d h_j (a_{j1}\e_1+\ldots +a_{jd}\e_d) : h_j\in H^2(\D) \right\}\\&=\overline{\hbox{span}\{S^n(a_{11}\e_1+\ldots+a_{1d}\e_d),\ldots, S^n(a_{d1}\e_1+\ldots+a_{dd}\e_d): n\in \N\}}^\Ha.
\end{split}
\end{equation*}
\end{proposition}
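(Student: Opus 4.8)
The plan is to read off, from the Beurling--Lax operator $Q$ furnished just before the statement, a $d\times d$ matrix of bounded holomorphic functions, and to show that $Q$ is nothing but left multiplication by that matrix; both displayed descriptions of $M$ then fall out of $Q(\Ha)=M$ together with density of polynomials. First I would record that $Q$ has norm at most $1$: since $\Ha=I_Q\stackrel{\perp}{\oplus}\ker Q$ and $Q$ is isometric on $I_Q$ and identically $0$ on $\ker Q$, we get $\|QF\|_\Ha\le\|F\|_\Ha$ for all $F\in\Ha$. Writing $\e_j\in\Ha$ for the constant function equal to the $j$-th standard basis vector of $\C^d$, I expand
\[
Q(\e_j)=a_{j1}\e_1+\ldots+a_{jd}\e_d,\qquad a_{jk}\in H^2(\D),\quad j=1,\ldots,d.
\]
These $a_{jk}$ will be the functions in the statement; the only point needing an argument is that they lie in $H^\infty(\D)$ with the claimed bound.

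Next I would use $Q\circ S=S\circ Q$. Iterating gives $Q(z^n\e_j)=S^nQ(\e_j)=z^nQ(\e_j)$ for every $n\in\N$, hence by linearity $Q(p\,\e_j)=p\,Q(\e_j)=\sum_{k=1}^d p\,a_{jk}\e_k$ for every polynomial $p$, and summing over $j$,
\[
Q\Big(\sum_{j=1}^d p_j\e_j\Big)=\sum_{j=1}^d p_j\,(a_{j1}\e_1+\ldots+a_{jd}\e_d)
\]
for all polynomials $p_1,\ldots,p_d$; that is, $Q$ acts on the dense subspace of vectors with polynomial components as left multiplication by the matrix $(a_{jk})$. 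To see $a_{jk}\in H^\infty(\D)$, observe that for every polynomial $p$,
\[
\|p\,a_{jk}\|_{H^2(\D)}^2\le\sum_{k=1}^d\|p\,a_{jk}\|_{H^2(\D)}^2=\|Q(p\,\e_j)\|_\Ha^2\le\|Q\|^2\|p\|_{H^2(\D)}^2\le\|p\|_{H^2(\D)}^2,
\]
so multiplication by $a_{jk}$ extends to a bounded operator on $H^2(\D)$ of norm at most $1$; since the multiplier algebra of $H^2(\D)$ is exactly $H^\infty(\D)$ with equality of norms (see, e.g., \cite{Duren}), this gives $a_{jk}\in H^\infty(\D)$ and $\|a_{jk}\|_\infty\le 1$.

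Finally I would pass to the limit and extract the two formulas. Because the $a_{jk}$ are bounded, both $Q$ and the map $(h_1,\ldots,h_d)\mapsto\sum_{j=1}^d h_j(a_{j1}\e_1+\ldots+a_{jd}\e_d)$ are continuous on $\Ha$ and they coincide on the dense set of polynomial-component vectors, hence everywhere; together with the Beurling--Lax identity $Q(\Ha)=M$ this yields
\[
M=\Big\{\sum_{j=1}^d h_j(a_{j1}\e_1+\ldots+a_{jd}\e_d):h_j\in H^2(\D)\Big\}.
\]
For the second equality, note $Q(z^n\e_j)=S^n(a_{j1}\e_1+\ldots+a_{jd}\e_d)$, that vectors with polynomial components are dense in $\Ha$, and that $M=Q(\Ha)$ is closed; continuity of $Q$ then forces $M$ to equal the closure of $Q$ of that dense span, i.e. $\overline{\mathrm{span}\{S^n(a_{j1}\e_1+\ldots+a_{jd}\e_d):j=1,\ldots,d,\ n\in\N\}}^\Ha$. (If $M=\{0\}$ then $Q=0$, all $a_{jk}\equiv 0$, and both formulas hold trivially.)

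Everything here is bookkeeping with density and continuity; the one ingredient that is not purely formal is the identification of the multiplier algebra of $H^2(\D)$ with $H^\infty(\D)$ (standard, and provable for instance from $\|g^n\|_{H^2}^{1/n}\to\|g\|_\infty$), and the only place to be slightly careful is fixing the index convention $Q(\e_j)=\sum_k a_{jk}\e_k$ so that the matrix acts row-wise exactly as written, rather than by its transpose. I do not anticipate a genuine obstacle; the content of the proposition is really the passage from the abstract $S$-inner operator of Beurling--Lax to its concrete matrix form.
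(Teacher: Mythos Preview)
Your proof is correct and follows essentially the same route as the paper's: read off $a_{jk}$ from $Q(\e_j)$, use $Q\circ S=S\circ Q$ to get $Q(p\,\e_j)=p\,Q(\e_j)$ for polynomials, invoke the $H^\infty$ multiplier characterization of $H^2(\D)$, and close by density. The only difference is organizational---you establish $a_{jk}\in H^\infty$ from the polynomial estimate first and then extend by continuity of both operators, whereas the paper first extends $Q(f\e_k)=fQ(\e_k)$ to all $f\in H^2(\D)$ via a uniform-on-compacta argument and only then bounds the entries---but the content is the same.
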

\begin{proof}
Let $Q$ be a $S$-inner operator such that $M=Q(\Ha)$. Let $k=1,\ldots, d$. Since $Q\circ S=S\circ Q$, it turns out that for every polynomial $q$,
\[
Q(q \e_k)=q Q(\e_k).
\]
If $f\in H^2(\D)$ and $\{q_j\}$ is a sequence of polynomials converging to $f$ in $H^2(\D)$, since $Q$ is a $S$-inner operator, we have
\begin{equation*}
\begin{split}
\lim_{j\to \infty} \|q_j Q( \e_k)-Q(f\e_k)\|_{\Ha}&= \lim_{j\to \infty}\| Q( q_j \e_k- f\e_k)\|_{\Ha}\\& \leq \lim_{j\to \infty}\|q_j \e_k- f\e_k\|_{\Ha}=\lim_{j\to \infty}\|q_j - f\|_{H^2(\D)}=0.
\end{split}
\end{equation*}
Therefore $\{q_j Q(\e_k)\}$ converges to $Q(f\e_k)$ in $\Ha$.

Now, as a matter of notation, if $F\in \Ha$, we  write $F=p_1(F)\e_1+\ldots+p_d(F)\e_d$. Note that, if $g\in H^2(\D)$ and $gF\in \Ha$ then $p_m(gF)=gp_m(F)$, $m=1,\ldots, d$.

Since $\{q_j Q(\e_k)\}$ converges to $Q(f\e_k)$ in $\Ha$ it follows that $\{p_m(q_j Q(\e_k))\}$ converges in $H^2(\D)$ to $p_m(Q(f\e_k))$, $m=1,\ldots, d$. Therefore,  $\{p_m(q_j Q( \e_k))\}$ converges uniformly on compacta to $p_m(Q(f\e_k))$.

Now we show that $\{p_m(q_j Q( \e_k))\}$ converges uniformly on compacta of $\D$ to $p_m(fQ(\e_k))$, $m=1,\ldots, d$, from which it follows that  $Q(f\e_k)=fQ(\e_k)$. Indeed, let $K\subset\subset \D$. Let $C_m:=\max_{\zeta \in K}|p_m(Q(\e_k))(\zeta)|$, $m=1,\ldots, d$. Since $\{q_j\}$ converges uniformly to $f$ on $K$, we have
\begin{equation*}
\begin{split}
&\max_{z\in K}|p_m(q_j(z)Q(\e_k)(z))-p_m(f(z)Q(\e_k)(z))|\\&=\max_{z\in K}|q_j(z) p_m(Q(\e_k)(z))-f(z) p_m(Q(\e_k)(z))| \leq C_m \max_{z\in K}|q_j(z) -f(z)|\to 0.
\end{split}
\end{equation*}
Therefore,
\begin{equation}\label{Eq:switchSA}
Q(f\e_k)=f Q(\e_k) \quad \forall f\in H^2(\D), k=1,\ldots, d.
\end{equation}
Now, for $j=1,\ldots, d$, we write
\[
Q(\e_j)=a_{j1}\e_1+\ldots+a_{jd}\e_d.
\]
By \eqref{Eq:switchSA}, for all $f\in H^2(\D)$,
\[
p_1(Q(f\e_1))=f a_{11}.
\]
Hence, taking into account that $Q$ is $S$-inner we have  for every $f\in H^2(\D)$,
\begin{equation}\label{eq:crea-multi-op}
\begin{split}
\|f a_{11}\|^2_{H^2(\D)}&\leq \|f a_{11}\|^2_{H^2(\D)}+\ldots+\|f a_{1d}\|^2_{H^2(\D)}= \|Q(f\e_1)\|^2_{\Ha}\\&\leq \|f \e_1\|^2_{\Ha}=\|f \|^2_{H^2(\D)}.
\end{split}
\end{equation}
Therefore, the multiplication operator $H^2(\D)\ni f\mapsto a_{11} f\in H^2(\D)$ is continuous and with (operator) norm $\leq 1$. By a standard result, $a_{11}\in H^\infty(\D)$ and $\|a_{11}\|_\infty\leq 1$. A similar argument works for $a_{jk}$, $j,k\in\{1,\ldots, d\}$.

Now,  every $f\in M$ is given by $f=Q(h)$ for some $h\in \Ha$ (actually, $h\in I_A$). If we write $h=h_1\e_1+\ldots+h_d\e_d$, we have
\[
f=Q(h)=h_1 Q(\e_1)+\ldots+h_d A(\e_d)=\sum_{j=1}^d h_j (a_{j1}\e_1+\ldots a_{jd}\e_d),
\]
and this proves the first part of the formula. As for the second, by Beurling theorem,
\[
\Ha=\overline{\hbox{span}\{S^n(\e_1),\ldots, S^n(\e_d): n\in \N\}}^\Ha.
\]
Since $Q \circ S= S\circ Q$, we have
\[
Q(\Ha)=\overline{\hbox{span}\{S^n(Q(\e_1)),\ldots, S^n(Q(\e_d)): n\in \N\}}^\Ha,
\]
and we are done.
\end{proof}

There are a couple of interesting corollaries we need. Before that, we need a definition.
Let
\[
\Ha_\infty:=\{f_1\e_1+\ldots+f_d\e_d: f_j\in H^\infty(\D), j=1,\ldots, d\}.
\]

\begin{corollary}\label{Cor:conten-chiuso-S}
Let $M, N$ be two closed $S$-invariant subspaces of $\Ha$. Then $M\subseteq N$ if and only if $M\cap \Ha_\infty\subseteq N$.
\end{corollary}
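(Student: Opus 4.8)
The plan is to prove Corollary~\ref{Cor:conten-chiuso-S} by using Proposition~\ref{Prop:structure-invariant-shif-space} to reduce containment of $M$ in $N$ to a statement about the finitely many generators of $M$. The forward implication is trivial: if $M\subseteq N$ then certainly $M\cap\Ha_\infty\subseteq M\subseteq N$. So the content is in the reverse implication: assuming $M\cap\Ha_\infty\subseteq N$, I want to conclude $M\subseteq N$.

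Here is how I would carry it out. By Proposition~\ref{Prop:structure-invariant-shif-space}, there exist $a_{jk}\in H^\infty(\D)$ with $\|a_{jk}\|_\infty\le 1$ such that, writing $g_j:=a_{j1}\e_1+\ldots+a_{jd}\e_d$ for $j=1,\ldots,d$, we have
\[
M=\overline{\hbox{span}\{S^n(g_1),\ldots,S^n(g_d):n\in\N\}}^\Ha.
\]
The key observation is that each $g_j\in\Ha_\infty$, since its components lie in $H^\infty(\D)$; and for every $n\in\N$, $S^n(g_j)=z^n g_j$ also has components in $H^\infty(\D)$, hence $S^n(g_j)\in\Ha_\infty$ as well. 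Therefore the entire spanning set $\{S^n(g_j):1\le j\le d,\ n\in\N\}$ is contained in $M\cap\Ha_\infty$. By hypothesis, $M\cap\Ha_\infty\subseteq N$, so this spanning set lies in $N$. Since $N$ is a closed subspace, it contains the closed linear span of any subset of it; hence $N\supseteq\overline{\hbox{span}\{S^n(g_j):\ldots\}}^\Ha=M$. This completes the reverse implication, and the corollary follows. Note that the $S$-invariance of $N$ is not even needed for the argument once the generators are in place — it is only invoked implicitly through Proposition~\ref{Prop:structure-invariant-shif-space} applied to $M$.

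The only step requiring any care is the verification that the generators and all their shifts lie in $\Ha_\infty$, which is immediate from $\|a_{jk}\|_\infty\le 1$ and the fact that multiplication by $z^n$ preserves $H^\infty(\D)$. I do not anticipate a genuine obstacle: the statement is essentially the observation that a closed $S$-invariant subspace is the closed span of $d$ bounded generators (and their shifts), so membership of those countably many bounded elements in $N$ forces the whole subspace into $N$. The subtlety the corollary is designed to capture — and which makes it useful later — is that one can test containment $M\subseteq N$ against only the \emph{bounded} elements of $M$, which is exactly what is needed when comparing the explicitly-constructed determinantal spaces (built from $H^\infty$ data) with an abstract invariant subspace.
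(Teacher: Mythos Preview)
Your proof is correct and follows essentially the same approach as the paper: apply Proposition~\ref{Prop:structure-invariant-shif-space} to $M$ to obtain bounded generators $g_j=a_{j1}\e_1+\ldots+a_{jd}\e_d\in M\cap\Ha_\infty$, and then use that $N$ is closed to conclude $M\subseteq N$. The only cosmetic difference is that the paper invokes the $S$-invariance of $N$ to pass from $g_j\in N$ to $S^n(g_j)\in N$, whereas you observe directly that $S^n(g_j)\in M\cap\Ha_\infty\subseteq N$; your remark that the $S$-invariance of $N$ is therefore not strictly needed is a correct sharpening.
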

\begin{proof}
Assume first that $M\cap \Ha_\infty\subseteq N$. Let $\{a_{jk}\}_{j,k=1,\ldots, d}$ be the bounded functions given by Proposition~\ref{Prop:structure-invariant-shif-space} for $M$. Hence  $a_{j1}\e_1+\ldots+a_{jd}\e_d \in N$ for $j=1,\ldots, d$. Since $N$ is $S$-invariant and by  Proposition~\ref{Prop:structure-invariant-shif-space}, we have
\[
M=\overline{\hbox{span}\{S^n(a_{11}\e_1+\ldots+a_{1d}\e_d),\ldots, S^n(a_{d1}\e_1+\ldots+a_{dd}\e_d): n\in \N\}}^\Ha\subseteq N.
\]
The other implication of the corollary is trivial.
\end{proof}

\begin{proposition}\label{Prop:O-closed-S}
Let $M$ be a $S$-invariant subspace of $\Ha$. Let $O\in H^\infty(\D)$ be outer. Then
\[
\overline{OM}^\Ha=M.
\]
\end{proposition}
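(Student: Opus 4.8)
The plan is to show the two inclusions $M \subseteq \overline{OM}^\Ha$ and $\overline{OM}^\Ha \subseteq M$ separately, the first being the substantive one. For the easy inclusion: since $O \in H^\infty(\D)$, the map $F \mapsto OF$ is a bounded operator on $\Ha$, so $OM \subseteq M$ because $M$ is an $S$-invariant (hence $H^\infty$-module, by Proposition~\ref{Prop:structure-invariant-shif-space} and density of polynomials) closed subspace — more directly, $OM \subseteq \overline{\{pG : p \text{ polynomial}, G \in M\}} \subseteq M$ since each $O G$ is a limit of $p_n G$ with $p_n \to O$ uniformly on compacta and in an appropriate sense controlling the $H^2$ norms via boundedness of $O$. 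Taking closures, $\overline{OM}^\Ha \subseteq M$.

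For the reverse inclusion $M \subseteq \overline{OM}^\Ha$: by Corollary~\ref{Cor:conten-chiuso-S} it suffices to check $M \cap \Ha_\infty \subseteq \overline{OM}^\Ha$, and in fact it is enough to produce, for each generator $G_j := a_{j1}\e_1 + \ldots + a_{jd}\e_d$ of $M$ furnished by Proposition~\ref{Prop:structure-invariant-shif-space}, an approximating sequence in $OM$ converging to $G_j$ in $\Ha$; then $S$-invariance and closedness of $\overline{OM}^\Ha$ finish it via the spanning formula in Proposition~\ref{Prop:structure-invariant-shif-space}. The key point is that $O$ is \emph{outer}, hence $S_1$-cyclic: $\overline{\{pO : p \text{ polynomial}\}}^{H^2(\D)} = H^2(\D)$. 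Choose polynomials $p_n$ with $p_n O \to 1$ in $H^2(\D)$. Then for a fixed generator $G = G_j \in \Ha_\infty$ I would consider $p_n O\, G \in OM$ (note $p_n G \in M$ by $S$-invariance, so $O p_n G = p_n O G \in OM$), and I must show $p_n O\, G \to G$ in $\Ha$. Componentwise this is $p_n O\, a_{jk} \to a_{jk}$ in $H^2(\D)$; since $a_{jk} \in H^\infty(\D)$, multiplication by $a_{jk}$ is bounded on $H^2(\D)$, so $p_n O \to 1$ in $H^2(\D)$ gives $a_{jk}(p_n O) \to a_{jk} \cdot 1 = a_{jk}$ in $H^2(\D)$, uniformly over the finitely many $k$. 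Summing the $d$ squared norms, $p_n O\, G \to G$ in $\Ha$, so $G \in \overline{OM}^\Ha$.

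Having shown $G_j \in \overline{OM}^\Ha$ for each $j$, and since $\overline{OM}^\Ha$ is $S$-invariant (as $S(OF) = O(SF)$ and $OM$ is $S$-invariant, its closure is too) and closed, the spanning identity
\[
M = \overline{\hbox{span}\{S^n(G_1),\ldots, S^n(G_d): n \in \N\}}^\Ha
\]
from Proposition~\ref{Prop:structure-invariant-shif-space} yields $M \subseteq \overline{OM}^\Ha$. Combined with the first inclusion, $\overline{OM}^\Ha = M$.

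The main obstacle is purely bookkeeping: one must be slightly careful that $p_n G \in M$ (so that $O p_n G$ genuinely lies in $OM$ rather than merely in $OM$'s closure) — this is exactly where $S$-invariance of $M$ is used — and that the approximation $p_n O \to 1$ is in the $H^2$ norm, so that boundedness of multiplication by each $a_{jk} \in H^\infty(\D)$ can be invoked to pass to $a_{jk} p_n O \to a_{jk}$. No genuine analytic difficulty arises beyond the cyclicity of outer functions, which is quoted in the preliminaries.
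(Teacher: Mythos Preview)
Your proof is correct and follows the same two-inclusion strategy as the paper: invoke Proposition~\ref{Prop:structure-invariant-shif-space} for $OM\subseteq M$, and use the $S_1$-cyclicity of the outer function $O$ together with Corollary~\ref{Cor:conten-chiuso-S} (applied to bounded elements of $M$, in your case the generators $G_j$) for the reverse inclusion. One caution: your alternative ``more directly'' justification for $OM\subseteq M$ is not rigorous as written---for a general $G\in M$ there is no obvious sense in which polynomial approximants $p_n\to O$ force $p_nG\to OG$ in $\Ha$---but your parenthetical appeal to Proposition~\ref{Prop:structure-invariant-shif-space} (writing $F=\sum_j h_j G_j$ and observing $OF=\sum_j (Oh_j)G_j\in M$) is exactly the paper's route and is what actually carries the argument.
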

\begin{proof}
First we show that $\overline{OM}^\Ha\subseteq M$. Since $M$ is closed, it is enough to show that $OM\subseteq M$. Let $F\in M$. By Proposition~\ref{Prop:structure-invariant-shif-space}, there exist sequences of polynomials $\{p_{n,j}\}_{n\in\N}$, $j=1,\ldots, d$ such that $F$ is the limit in $\Ha$ of
the sequence $\{Q_n:=\sum_{j=1}^dp_{n,j}(a_{j1}\e_1+\ldots+a_{jd}\e_j)\}$. Now, again by Proposition~\ref{Prop:structure-invariant-shif-space},
\[
OQ_n=O\left(\sum_{j=1}^dp_{n,j}(a_{j1}\e_1+\ldots+a_{jd}\e_j)\right)=\sum_{j=1}^dp_{n,j}\left(O(a_{j1}\e_1+\ldots+a_{jd}\e_j)\right)\in M,
\]
therefore $OQ_n\in M$ for every $n$. Since $\|OQ_n-OF\|_\Ha\leq \|O\|_\infty \|Q_n-F\|_\Ha$, it follows that $\{OQ_n\}\subset M$ converges to $OF$ in $\Ha$, hence $OF\in M$, and we are done.

Now we prove that $M\subseteq \overline{OM}^\Ha$. Since the multiplication by $O$ commutes with $S$, we have that $OM$ is $S$-invariant, and so is $\overline{OM}^\Ha$. Therefore, by Corollary~\ref{Cor:conten-chiuso-S}  it is enough to show that $M\cap \Ha_\infty \subseteq \overline{OM}^\Ha$. To this aim, let $F\in M\cap \Ha_\infty$. Since $O$ is outer, hence a cyclic vector for $S_1$, there exists  a sequence of polynomials $\{p_n\}$ such that $\{p_nO\}$ converges to $1$ in $H^2(\D)$. Since $(p_nO)F=O(p_n F)$ and $p_nF\in M$, it follows that $p_nOF\in OM$ for all $n$. Now, write $F=f_1\e_1+\ldots+f_d\e_d$, with $f_j\in H^\infty(\D)$, $j=1,\ldots, d$. Let $C>0$ be such that $\|f_j\|_\infty<C$ for $j=1,\ldots, d$. Hence,
\begin{equation*}
\begin{split}
\|p_nOF-F\|^2_\Ha&=\sum_{j=1}^d\|(p_nO-1)f_j\|^2_{H^2(\D)}\leq \sum_{j=1}^d\|f_j\|^2_\infty\|p_nO-1\|^2_{H^2(\D)}\\ &\leq dC^2 \|p_nO-1\|^2_{H^2(\D)}.
\end{split}
\end{equation*}
Since $\lim_{n\to\infty}\|p_nO-1\|^2_{H^2(\D)}=0$, it follows that $\{p_nOF\}$ converges to $F$ in $\Ha$. But $\{p_nOF\}\subset OM$, hence $F\in \overline{OM}^\Ha$, and we are done.
\end{proof}

\begin{definition}\label{Def:BL-matrix}
Let $M\subset \Ha$ be a closed $S$-invariant subspace, and let $\{a_{jk}\}_{j,k=1,\ldots, d}$ be given by Proposition~\ref{Prop:structure-invariant-shif-space}. The matrix $A_M=(a_{jk})\in\mathcal M_\infty$ is a {\sl Beurling-Lax matrix} of $M$.
\end{definition}

\section{The structure theorem}

Let $N\subset \Ha$ be a closed $S$-invariant subspace. Let $A$ be a Beurling-Lax matrix associated to $N$ (see Definition~\ref{Def:BL-matrix}). Note that $A$ is the zero matrix if and only if $N=\{0\}$.

In case $A$ is not the zero matrix, and $\det A\not\equiv 0$, we can associate to $A$ the closed $S$-invariant subspace $\mathcal N_A$ (see Definition~\ref{Def:NA}). In case $\det A\equiv 0$ we can associate the closed $S$-invariant subspace $\mathcal N_{A,J}$  for every good multi-index of $\hat A$ (see Definition~\ref{Def:NA-det0}).

The main result of this section (from which Theorem~\ref{Thm:main-intro} follows at once) is the following:

\begin{theorem}\label{Thm:subspace-includL}
Let $\{0\}\subsetneq N\subsetneq\Ha$ be a closed $S$-invariant subspace. Let  $A$ be a Beurling-Lax matrix associated to $N$. Then,
\begin{itemize}
\item if $\det A\not\equiv 0$,
\begin{equation}\label{N-iswhat}
N= \mathcal N_A.
\end{equation}
\item While, if $\det A\equiv 0$,
\begin{equation}\label{N-iswhat0}
N= \mathcal N_{A, J},
\end{equation}
for every  good multi-index $J$ of $\hat A$. In particular, $N_{A, J}=N_{A,J'}$ for every $J, J'$ good multi-indices of $\hat A$.
\end{itemize}
\end{theorem}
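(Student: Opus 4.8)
The plan is to prove both formulas simultaneously by showing that in each case the relevant space $\mathcal N_A$ (resp. $\mathcal N_{A,J}$) coincides with $N$, and the cleanest route is to establish the second, more explicit description stated in the Remark after Definition~\ref{Def:NA-det0}: namely that the candidate space equals the closed $S$-invariant span of the rows $R_j:=a_{j1}\e_1+\ldots+a_{jd}\e_d$, $j=1,\ldots,d$, which by Proposition~\ref{Prop:structure-invariant-shif-space} is exactly $N$. So the core of the argument is a double inclusion between $N$ and $\mathcal N_A$ (or $\mathcal N_{A,J}$). First I would handle the inclusion $N\subseteq \mathcal N_A$: since $\mathcal N_A=\phi_A\mathcal Q_{\varphi_A}$ and each generator $R_j$ equals $\phi_A\hat R_j$ with $\hat R_j=\hat a_{j1}\e_1+\ldots+\hat a_{jd}\e_d$, it suffices to check $\hat R_j\in\mathcal Q_{\varphi_A}$, i.e. $L_{\hat A,i}(\hat R_j)\in\varphi_A H^2(\D)$ for all $i$. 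But $L_{\hat A,i}(\hat R_j)$ is the determinant of the matrix $\hat A$ with its $i$-th row replaced by the $j$-th row; if $i=j$ this is $\det\hat A$, whose inner part is exactly $\varphi_A$, and if $i\neq j$ the determinant vanishes identically. Because $N$ is $S$-invariant and $\mathcal Q_{\varphi_A}$ is $S$-invariant and closed (Corollary~\ref{Cor:determinantal-space-S}), applying $S^n$, taking spans and closures gives $N\subseteq\mathcal N_A$. The $\det A\equiv 0$ case is analogous, using that the relevant minors feeding $L_{\hat A^{S,J},i,J}$ produce either a proportional determinant or a determinantally-zero matrix, and that $\hat R_j\in\mathcal R(A)$ since the rank condition \eqref{Eq:rank-f-A} obviously holds when the top row is one of the rows of $\hat A$.

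The harder inclusion is the reverse, $\mathcal N_A\subseteq N$. Here I would use Corollary~\ref{Cor:conten-chiuso-S}: it is enough to show $\mathcal N_A\cap\Ha_\infty\subseteq N$. Take $F=f_1\e_1+\ldots+f_d\e_d\in\mathcal Q_{\varphi_A}$ with bounded entries; I want to write $\phi_A F$ as a combination $\sum_j h_j R_j$ with $h_j\in H^2(\D)$ (this is the membership-in-$N$ criterion from Proposition~\ref{Prop:structure-invariant-shif-space}), equivalently $F=\sum_j h_j\hat R_j$. By Cramer's rule applied to the (generically invertible) matrix $\hat A$, one has the pointwise identity $F(z)=\sum_j \frac{L_{\hat A,j}(F)(z)}{\det\hat A(z)}\,\hat R_j(z)$ wherever $\det\hat A(z)\neq 0$; the hypothesis $L_{\hat A,j}(F)\in\varphi_A H^2(\D)=I(\det\hat A)H^2(\D)$ says precisely that $L_{\hat A,j}(F)/\varphi_A\in H^2(\D)$, and then $L_{\hat A,j}(F)/\det\hat A = (L_{\hat A,j}(F)/\varphi_A)\cdot(1/O(\det\hat A))$ is a ratio of an $H^2$ function by an outer function. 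Such a ratio need not lie in $H^2$, but it does lie in the Smirnov class $N^+$, and since $F$ has bounded — in particular $H^2$ — components, the products $h_j=L_{\hat A,j}(F)/\det\hat A$ will be forced into $H^2(\D)$; this is the main technical obstacle and the step that needs care. A clean way around it: instead of dividing by the outer part, multiply through. Since $O(\det\hat A)$ is outer hence $S_1$-cyclic, by Proposition~\ref{Prop:O-closed-S} we have $\overline{O(\det\hat A)N}^\Ha=N$, so it suffices to show $O(\det\hat A)\cdot\phi_A F\in N$. But $O(\det\hat A)\phi_A F = \sum_j \big(\varphi_A^{-1}L_{\hat A,j}(F)\big)\,R_j$ now has genuinely $H^2$ (indeed bounded times $H^2$) coefficients $\varphi_A^{-1}L_{\hat A,j}(F)\in H^2(\D)$, so this element lies in $N$ by Proposition~\ref{Prop:structure-invariant-shif-space}; taking closures finishes it.

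For the $\det A\equiv 0$ case the same strategy applies after restricting to the coordinates picked out by the good multi-index $J$ and a set $S\in\mathcal J_J(\hat A)$ realizing a non-vanishing $k\times k$ minor: the condition $F\in\mathcal R(A)$ forces, via Kronecker's theorem and \eqref{Eq:rank-as-det}, that each $f_m$ ($m\notin J$) is the unique linear combination of $\{f_j: j\in J\}$ dictated by the rows of $\hat A$, i.e. $F$ is determined by a Cramer-type formula over the minor $\hat A^{S,J}$, with numerators $L_{\hat A^{S,J},i,J}(F)$ that by hypothesis are divisible by $\varphi_{J,A}$, which is the inner part of $\det\hat A^{S,J}$ up to the g.c.d.\ correction; multiplying by the outer part of that minor and invoking Proposition~\ref{Prop:O-closed-S} again expresses (a dense subset of) $\phi_A F$ as an $H^2$-combination of the rows $R_j$, giving $\mathcal N_{A,J}\subseteq N$. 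Since both inclusions hold for every good multi-index $J$, the equality $N=\mathcal N_{A,J}$ is independent of $J$, which yields the last assertion $\mathcal N_{A,J}=\mathcal N_{A,J'}$ for free. The one point I would be most careful about is verifying that the g.c.d.\ reductions (passing from $A$ to $\hat A$, and the definition of $\varphi_A$ and $\varphi_{J,A}$ as g.c.d.'s over several minors rather than a single determinant) are exactly what is needed to make the divisibility statements $L_{\hat A^{S,J},i,J}(F)/\varphi_{J,A}\in H^2$ come out right across all $S$; Lemma~\ref{Lem:coprime-corona} is the tool that lets one pass from divisibility by each $I(\det\hat A^{S,J})$ to divisibility by their g.c.d.\ in the limit.
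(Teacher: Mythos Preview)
Your Case~A argument is essentially the paper's: the easy inclusion via evaluating $L_{\hat A,i}$ on the generators $\hat R_j$, and the hard inclusion via Cramer's identity $\det\hat A\cdot F=\sum_j L_{\hat A,j}(F)\,\hat R_j$ followed by the outer-function trick. One wording slip: you invoke Proposition~\ref{Prop:O-closed-S} as $\overline{O N}=N$, but that is the wrong space. What you need is either $\overline{O\,\mathcal N_A}=\mathcal N_A$ (so $O\mathcal N_A\subseteq N$ forces $\mathcal N_A\subseteq N$), which is exactly what the paper does at the level of $Z=\mathcal Q_{\varphi_A}$, or the elementary observation that if $G\in\Ha_\infty$ and $OG\in N$ then $p_nOG\to G$ in $\Ha$ with $p_nO\to 1$ in $H^2(\D)$, so $G\in N$. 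Either way the argument goes through.

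Case~B has a genuine gap. Fix $S\in\mathcal J_J(\hat A)$ and write $\det\hat A^{S,J}=\varphi_S O_S$ with $\varphi_S=I(\det\hat A^{S,J})$. Cramer over the minor $\hat A^{S,J}$ gives coefficients $L_{\hat A^{S,J},j,J}(F)/\det\hat A^{S,J}$; after multiplying by $O_S$ these become $L_{\hat A^{S,J},j,J}(F)/\varphi_S$. Your hypothesis only guarantees $L_{\hat A^{S,J},j,J}(F)\in\varphi_{J,A}H^2(\D)$, and since $\varphi_S=\varphi_{J,A}\theta_S$ with $\theta_S$ possibly a non-trivial inner function, these quotients need not lie in $H^2(\D)$. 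So ``multiplying by the outer part of that minor'' does not, by itself, produce an $H^2$-combination of the rows. Your description of Lemma~\ref{Lem:coprime-corona}'s role is also inverted: we already have divisibility by the g.c.d.\ $\varphi_{J,A}$; the problem is the missing factor $\theta_S$.

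The paper repairs this by introducing, for each $S$, the intermediate space
\[
T_S:=\Bigl(\bigcap_{j=1}^k L_{\hat A^{S,J},j,J}^{-1}(\varphi_S H^2(\D))\Bigr)\cap\mathcal R(A),
\]
defined with $\varphi_S$ rather than $\varphi_{J,A}$. For $T_S$ the Cramer-plus-outer argument \emph{does} work and gives $T_S\subseteq W$; one then checks $W\subseteq P:=\overline{\sum_S T_S}$, so $P=W$. The final step is the correct use of Lemma~\ref{Lem:coprime-corona}: for $F\in T(A)\cap\Ha_\infty$ one has $\theta_S F\in T_S\subseteq P$ for every $S$, and since $\gcd\{\theta_S\}=1$, there exist $h^S_n\in H^2(\D)$ with $\sum_S h^S_n\theta_S\to 1$; because $F$ is bounded, $\bigl(\sum_S h^S_n\theta_S\bigr)F\to F$, hence $F\in P=W$. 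This two-layer structure (first $T_S\subseteq W$ via outer multiplication, then $T(A)\subseteq P$ via the coprime $\theta_S$'s) is the piece your plan is missing.
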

\begin{proof}
Since $N\neq\{0\}$, $A$ is not the matrix identically zero. Let $\hat A$ be the reduced matrix of $A$ (see Definition~\ref{Def:reduc-matrix}). Hence, by Proposition~\ref{Prop:structure-invariant-shif-space}
\begin{equation*}
\begin{split}
N&=\overline{\hbox{span}\{S^n(a_{11}\e_1+\ldots+a_{1d}\e_d),\ldots, S^n(a_{d1}\e_1+\ldots+a_{dd}\e_d): n\in \N\}}^\Ha\\&
=\overline{\hbox{span}\{\phi_A S^n(\hat a_{11}\e_1+\ldots+\hat a_{1d}\e_d),\ldots, \phi_A S^n(\hat a_{d1}\e_1+\ldots+\hat a_{dd}\e_d): n\in \N\}}^\Ha.
\end{split}
\end{equation*}
Let
\[
W:=\overline{\hbox{span}\{S^n(\hat a_{11}\e_1+\ldots+\hat a_{1d}\e_d),\ldots, S^n(\hat a_{d1}\e_1+\ldots+\hat a_{dd}\e_d): n\in \N\}}^\Ha.
\]
Bering in mind that the multiplication by $\phi_A$ is an isometry in $\Ha$, we have
\[
N=\phi_A W.
\]

\medskip
\noindent {\sl Case A: $\det A\not\equiv 0$}.
\medskip

Fix $j\in\{1,\ldots, d\}$. Let $L_{A,j}$ be the determinantal operator associated to $A$ (see Definition~\ref{Def:operator LAj}). Since $S_1 \circ L_{A, j} = L_{A, j}\circ S$, it follows that, for $k\in\{1,\ldots, d\}\setminus\{j\}$,
\[
L_{A, j}\left(\hbox{span}\{S^n(\hat a_{k1}\e_1+\ldots+\hat a_{kd}\e_d) : n\in\N\} \right)=\{0\},
\]
hence, since $L_{A, j}$ is continuous,
\begin{equation*}
L_{A,j}(W)=L_{A,j}\left(\overline{\hbox{span}\{ S^n(\hat a_{j1}\e_1+\ldots+\hat a_{jd}\e_d): n\in \N\}}^\Ha\right).
\end{equation*}
Now, for every $n\in \N$ we have
\[
L_{A_j}(S^n(\hat a_{j1}\e_1+\ldots+\hat a_{jd}\e_d))=S_1^n(\det \hat A).
\]
Recall that,   by definition (and since $\det\hat A\not\equiv 0$)  $\varphi_A$ is the inner part of $\det \hat A$. We let $O$ to be the outer part of $\det \hat A$. Therefore, by Beurling theorem,
\begin{equation*}
\begin{split}
\overline{L_{A_j}(W)}^{H^2(\D)}&=
\overline{L_{A,j}\left(\overline{\hbox{span}\{ S^n(\varphi_A O): n\in \N\}}^\Ha\right)}^{H^2(\D)}\\&=\overline{\{\hbox{span}\{S_1^n(\varphi_A O): n\in \N\}}^{H^2(\D)}=\varphi_A H^2(\D).
\end{split}
\end{equation*}
From this it follows that $W\subseteq L_{A,j}^{-1}(\varphi_A H^2(\D))$ and then $N\subseteq \phi_A L_{A,j}^{-1}(\varphi_A H^2(\D))$, $j=1\ldots, d$, that is, $N\subseteq \mathcal N_A$.

Now, we show that $\mathcal N_A\subseteq N$. Since $N=\phi_A W$ and $\mathcal N_A=\phi_A \bigcap_{j=1}^d L_{A,j}^{-1}(\varphi_A H^2(\D))$ , it is enough to show that
\[
Z:=\bigcap_{j=1}^d L_{A,j}^{-1}(\varphi_A H^2(\D))\subseteq W.
\]
To this aim, let $F=f_1\e_1+\ldots f_d\e_d\in \Ha$. By \eqref{Eq:L-determ-Laplace}, $F\in Z$ if and only if
\[
L_{A, j}(F)=\sum_{k=1}^d (-1)^{k+j} f_k \det (\hat A^{jk})\in \varphi_A H^2(\D), \quad j=1,\ldots d.
\]
Since by Beurling theorem the closure of $\hbox{span}\{S_1^n(\varphi_A O): n\in \N\}$  is $\varphi_A H^2(\D)$, there exist  sequences of polynomials $\{p_{nj}\}_{n\in\N}$, $j=1,\ldots, d$, such that
\begin{equation}\label{Eq:determ-poly-Beur}
\lim_{n\to\infty}\|\varphi_A O p_{nj}-\sum_{k=1}^d (-1)^{k+j} f_k \det (\hat A^{jk})\|_{H^2(\D)}=0, \quad j=1,\ldots, d.
\end{equation}
Taking into account that $\hat a_{jk}\in H^\infty(\D)$, $j,k\in \{1,\ldots, d\}$---and hence if a sequence $\{h_n\}\subset H^2(\D)$ converges to $h$ in $H^2(\D)$ then $\{\hat a_{jk}h_n\}$ converges to $\hat a_{jk}h$ in $H^2(\D)$---from \eqref{Eq:determ-poly-Beur} we have for every $m=1,\ldots, d$
\begin{equation}\label{Eq:determ-poly-Beur2}
\lim_{n\to\infty}\|\sum_{j=1}^d \hat a_{jm} \varphi_A O p_{nj}-\sum_{k=1}^d  f_k \left(\sum_{j=1}^d  (-1)^{k+j} \hat a_{jm}\det (\hat A^{jk})\right)\|_{H^2(\D)}=0.
\end{equation}
Now we claim that
\begin{equation}\label{Eq:claim-det-jm}
\sum_{j=1}^d  (-1)^{k+j} \hat a_{jm}\det (\hat A^{jk})=\begin{cases}
\varphi_A O & k=m\\
0 & k\neq m
\end{cases}
\end{equation}
Assuming the claim for the moment, from \eqref{Eq:determ-poly-Beur2} we have
\[
\lim_{n\to\infty}\|\varphi_A \left(O\sum_{j=1}^d   p_{nj} \hat a_{jk} -O f_m\right)\|_{H^2(\D)}=0, \quad m=1,\ldots, d.
\]
Since $\varphi_A$ is inner, this implies that $\{ O\sum_{j=1}^d  p_{nj} \hat a_{jm}\}$ converges to $Of_m$ in $H^2(\D)$, $m=1,\ldots, d$. In other words, if we let
\[
Q_n:=p_{n1} (\hat a_{11}\e_1+\ldots+\hat a_{1d}\e_d)+\ldots+ p_{nd}(\hat a_{d1}\e_1+\ldots+\hat a_{dd}\e_d ),
\]
then $\{OQ_n\}$ converges  to $O(f_1\e_1+\ldots+f_d\e_d)=OF$ in $\Ha$. Note that $Q_n\in W$, so that $OQ_n\in OW$ for all $n$. By Proposition~\ref{Prop:O-closed-S}, $OW\subseteq W$, hence, $OQ_n\in W$ for all $n$. It follows that $OF\in W$. By the arbitrariness of $F$, this means that $OZ\subset W$ and, since $W$ is closed, $\overline{OZ}^\Ha\subseteq W$. Since $Z$ is $S$-invariant, by Proposition~\ref{Prop:O-closed-S} we have $Z=\overline{OZ}^\Ha$, hence $Z\subset W$ as wanted.

We are thus left to prove \eqref{Eq:claim-det-jm}.  Let
\[
U:=\{z\in \D: \det \hat A(z)\neq 0\}.
\]
Since $\det A\not\equiv 0$---hence $\det \hat A\not\equiv 0$---the set $U$ is open, connected and dense in $\D$. Let  $B$ the $d\times d$ matrix whose $(j,k)$ entry is $(-1)^{j+k}\det \hat A^{kj}$.
Then it follows from classical linear algebra that, for all $z\in U$
\[
\hat A(z)\cdot B(z)=(\det \hat A(z)) {\sf Id},
\]
where ${\sf Id}$ is the $d\times d$ identity matrix. Expanding the previous equality one obtains \eqref{Eq:claim-det-jm} for all $z\in U$. By analytic extension, it holds thus in $\D$ and we are done.

\medskip
\noindent {\sl Case B: $\det A\equiv 0$}.
\medskip

Let
\[
T(A):= \left(\bigcap_{S\in \mathcal J_J(\hat A), j=1,\ldots, k}L^{-1}_{\hat A^{S,J}, j, J} (\varphi_{J,A}H^2(\D))\right)\cap \mathcal R(A).
\]
Since
\[
\mathcal N_{A,J}=\phi_A T(A)
\]
 (see Definition~\ref{Def:NA-det0}) and $N=\phi_A W$, it is enough to show that $W=T(A)$.

To simplify the readability, we can assume that $J=\{1,\ldots, k\}$. This would not affect the proof since, switching the columns $j$ and $m$ in $\hat A$ corresponds to see the problem under the  isometric automorphism of $\Ha$  which switches the coefficients of $\e_j$ and $\e_m$.

Now, we claim that $W\subseteq T(A)$. To this aim, because of the (multi-)linearity of the determinant, it is enough to show  that $\underline{\hat a}_m:=\hat a_{m1} \e_1+\ldots +\hat a_{md} \e_d\in T(A)$ for $m=1,\ldots, d$. So, fix $m\in \{1,\ldots, d\}$. Obviously, $\underline{\hat a}_m\in \mathcal R(A)$. Next, let $S=\{1\leq s_1<\ldots<s_k\leq d\}\in\mathcal J_J(\hat A)$ and $j\in\{1,\ldots, k\}$.  We need to show that $L_{\hat A^{S,J}, j, J}(\underline{\hat a}_m) \in \varphi_{J,A}H^2(\D)$. Now,
\[
L_{\hat A^{S,J}, j, J}(\underline{\hat a}_m)=\det \left(\begin{matrix} \hat a_{s_11} & \ldots &  \hat a_{s_1k}\\ \ldots & \ldots & \ldots \\  \hat a_{s_{j-1}1} & \ldots &  \hat a_{s_{j-1}k}\\ \hat a_{m1} & \ldots & \hat a_{mk}\\  \hat a_{s_{j+1}1} & \ldots & \hat a_{s_{j+1}k}\\ \ldots & \ldots & \ldots \\  \hat a_{s_k1} & \ldots &  \hat a_{s_k k}\\  \end{matrix} \right).
\]
There are two possibilities: either the determinant is identically zero---so that $L_{\hat A^{S,J}, j, J}(\underline{\hat a}_m) \in \varphi_{J,A}H^2(\D)$---or it is not identically zero. In the latter case it means that, up to reordering in increasing way, the multi-index $\{s_1,\ldots, s_{j-1}, m, s_{j+1},\ldots, s_k\}\in \mathcal J_J(\hat A)$. Hence, by definition of $\varphi_{J,A}$, it follows that  $\varphi_{J,A}$ divides the inner part of $L_{\hat A^{S,J}, j, J}(\underline{\hat a}_m)$, that is, $L_{\hat A^{S,J}, j, J}(\underline{\hat a}_m) \in \varphi_{J,A}H^2(\D)$.

In order to show that $T(A)\subseteq W$, let $S=\{1\leq s_1<\ldots<s_k\leq d\}\in\mathcal J_J(\hat A)$ and let
\[
\hat A^{S, J}=\left( \begin{matrix} \hat a_{s_11} & \ldots &  \hat a_{s_1k}\\\vdots&\vdots&\vdots  \\ \hat a_{s_k1} & \ldots &  \hat a_{s_k k}\\  \end{matrix}\right).
\]
Since $\det \hat A^{S, J}\not\equiv 0$ and $\det \hat A^{S, J}\in H^\infty(\D)$, the inner part $\varphi_S:=I(\det \hat A^{S, J})$ of $\det \hat A^{S, J}$ is well defined (and $\varphi_{J,A}$ divides $\varphi_S$ by definition).

We first prove that
\begin{equation}\label{Eq:first-inclusion1}
T_S:=\left(\bigcap_{j=1,\ldots, k} L_{\hat A^{S,J}, j, J}^{-1}(\varphi_S H^2(\D))\right)\cap\mathcal R(A)\subset W.
\end{equation}

Let $f_1\e_1+\ldots +f_d\e_d\in T_S$. Hence,
\begin{equation}\label{Eq:TA-as-reduced-matrix}
 \hbox{rank}\left(\begin{matrix}f_1& \ldots & f_d \\ \hat a_{s_11}& \ldots & \hat a_{s_1d}\\ \vdots& \ldots& \vdots\\ \hat a_{s_k1}  & \ldots & \hat a_{s_kd} \end{matrix}\right)=k.
\end{equation}

 Let $U_S$ be the open dense set of $\D$ such that $\det \hat A^{S, J}(z)\neq 0$. Then, for every $z\in U_S$, by classical linear algebra, there exist $\lambda_{j}(z)\in\C$, $j=1,\ldots, k$ such that
\begin{equation}\label{Eq:sistem-lin-f-j-rank-k}
f_j(z)=\lambda_1(z) \hat a_{s_1j}(z)+\ldots+\lambda_k(z)\hat a_{s_kj}, \quad j=1,\ldots, d.
\end{equation}
Since $\det (\hat A^{S, J}(z))\neq 0$ (for $z\in U_S$), it follows by Cramer's rule that for $j=1,\ldots, k$,
\begin{equation*}
\det(\hat A^{S, J}_S(z)) \lambda_j(z)=\det \left(\begin{matrix} \hat a_{s_11}& \ldots & \hat a_{s_1k}\\ \vdots& \ldots& \vdots\\ \hat a_{s_{j-1}1}& \ldots & \hat a_{s_{j-1}k}\\ f_1& \ldots & f_k  \\ \hat a_{s_{j+1}1}& \ldots & \hat a_{s_{j+1}k}\\  \vdots& \ldots& \vdots\\ \hat a_{s_k1}  & \ldots & \hat a_{s_kk} \end{matrix} \right).
\end{equation*}
Since $\hat a_{jm}\in H^\infty(\D)$ and $f_j\in H^2(\D)$ (for all  indices $j, m$), the right hand side of the previous equation is in $H^2(\D)$ and, since $L_{\hat A^{S,J}, j, J}(f_1\e_1+\ldots+f_d\e_d)\in \varphi_S H^2(\D)$ for $j=1,\ldots, k$ by hypothesis, it follows that it is actually in $\varphi_S H^2(\D)$. Thus, if $O_S\in H^\infty(\D)$ denotes the outer part of $\det \hat A^{S,J}$, we have that $O_S\lambda_j\in H^2(\D)$ for $j=1,\ldots, k$.

Therefore, from \eqref{Eq:sistem-lin-f-j-rank-k}, we have that
\[
O_S(f_1\e_1+\ldots+ f_d\e_d)=\sum_{m=1}^k O_S\lambda_m (\hat a_{s_m1} \e_1+\ldots+\hat a_{s_md}\e_d)\in W.
\]
In particular, it follows that $O_S T_S\subset W$. Taking into account that $T_S$ is a closed, $S$-invariant subspace of $\Ha$ by Lemma~\ref{Lem:continuous-S-L} and Lemma~\ref{lem:r(A)-S-closed-inv}, equation \eqref{Eq:first-inclusion1}  follows at once from Corollary~\ref{Prop:O-closed-S}.

Hence,
\[
P:=\overline{\sum_{S\in\mathcal J_J(\hat A)} T_S}^\Ha\subseteq W.
\]
On the other hand, note that $P$ is closed and $S$-invariant (since it is the closure of the sum of $S$-invariant spaces). Also, note that $\underline{\hat a_m}:=\hat a_{m1}\e_1+\ldots +\hat a_{md}\e_d$ belongs to some $T_S$ (indeed, if $m$ is part of some multi-index $S\in\mathcal J_J(\hat A)$ then $\underline{\hat a_m}\in T_S$, while, if $m$ does not  belong to any multi-index $S\in\mathcal J_J(\hat A)$ then $L_{\hat A^{S,J}, j, J}(\underline{\hat a_m})\equiv 0$ for all $S\in\mathcal J_J(\hat A)$ and $j\in\{1,\ldots, k\}$, so that $\underline{\hat a_m}\in T_S$ for all $S\in\mathcal J_J(\hat A)$). Thus, $W\subseteq P$ and therefore
\[
P=W.
\]
We are thus left to show that
\[
T(A)\subseteq P.
\]
By Corollary~\ref{Cor:conten-chiuso-S}, it is enough to show that $T(A)\cap \Ha_\infty\subseteq P$. To this aim, let $F:=f_1\e_1+\ldots+f_d\e_d\in T(A)\cap \Ha_\infty$. For every $S\in\mathcal J_J(\hat A)$ let $\theta_S$ be the inner function (unique up to multiplication by a unimodular constant) such that $\varphi_S=\varphi_{J,A}\theta_S$. Note that the (inner function) greatest common divisor of $\{\theta_S\}_{S\in\mathcal J_J(\hat A)}$ is $1$ (for otherwise $\varphi_{J,A}$ would not be the greatest common divisor of the $\varphi_S$'s).

Fix $S\in \mathcal J_J(\hat A)$ and $j\in\{1,\ldots, k\}$. Since by hypothesis $L_{\hat A^{S,J}, j, J}(F)\in \varphi_{J,A}H^2(\D)$, it follows by the (multi-)linearity of the determinant that
\[
L_{\hat A^{S,J}, j, J}(\theta_S F)\in \varphi_S H^2(\D).
\]
Thus, $\theta_S F\in P$ and, in particular, since $F\in\Ha_\infty$,  we have $h\theta_S F\in P$ for all $h\in H^2(\D)$.
Hence, if for each $S\in\mathcal J_J(\hat A)$ we take a function $h^S\in H^2(\D)$, we have
\[
\left(\sum_{S\in\mathcal J_J(\hat A)} h^S\theta_S\right) F \in P.
\]
Now, let $\{h^S_n\}\subset H^2(\D)$ be the sequences given by  Lemma~\ref{Lem:coprime-corona} such that $\{\sum_{S\in\mathcal J_J(\hat A)} h_n^S\theta_S\}$ converges to $1$ in $H^2(\D)$, and let
\[
F_n:=\left(\sum_{S\in\mathcal J_J(\hat A)} h_n^S\theta_S\right) F\in P.
\]
Taking into account that $F\in \Ha_\infty$, it follows that $\{F_n\}$ converges to $F$ in $\Ha$ and hence $F\in P$, and we are done.
\end{proof}


\begin{thebibliography}{10}
\bibitem{ARR} {\sc A. Aleman, S. Richter,  W. T. Ross}, {\sl Pseudocontinuations and the
backward shift}, Indiana Univ. Math. J., 47, (1998) no. 1,
223--276.
\bibitem{ARS} {\sc A. Aleman, S. Richter, C. Sundberg}, {\sl Beurling's theorem for
the Bergman space},  Acta Math., 177 (1996), no. 2, 275--310.
\bibitem{ABFP} {\sc C. Apostol, H. Bercovici, C. Foia\c{s}, C. Pearcy}, {\sl
Invariant subspaces, dilation theory, and the structure of the predual of a dual algebra. I}.
J. Funct. Anal. 63 (1985), no.3, 369--404.
\bibitem{Atz} {\sc A. Atzmon}, {\sl  Maximal, minimal, and primary invariant subspaces}. J. Funct. Anal.185 (2001), 1, 155--213.
\bibitem{Beu} {\sc A. Beurling}, {\sl Some theorems on boundedness of analytic functions}. Duke Math. J. 16 (1949), 355--359.
\bibitem{Beu2} {\sc A. Beurling}, {\sl On two problems concerning linear
transformations in Hilbert space}, Acta Math. 81, (1948), 239--255.
\bibitem{CiRo} {\sc J. A. Cima, W. T. Ross}, {\sl The backward shift on the Hardy space}.
Math. Surveys Monogr., 79 American Mathematical Society, Providence, RI, 2000, xii+199 pp.
\bibitem{ChPa} {\sc I. Chalendar, J. R. Partington}, {\sl Modern approaches to the invariant-subspace problem}. Cambridge Tracts in Mathematics, 188. Cambridge University Press, Cambridge, 2011. xii+285 pp.
\bibitem{Duren}  {\sc P. L. Duren}, {\sl Theory of  $H^p$  spaces}. Pure Appl. Math., Vol. 38
Academic Press, New York-London, 1970, xii+258 pp.
\bibitem{EL} {\sc R. Enflo, V. Lomonosov}, {\sl Chapter 13 - Some Aspects of the Invariant Subspace Problem}, Editor(s): W.B. Johnson, J. Lindenstrauss, Handbook of the Geometry of Banach Spaces,
Elsevier Science B.V., Volume 1, 2001, Pages 533-559.
\bibitem{Guo} {\sc K. Guo, W. Hei, S. Hou}, {\sl Maximal invariant subspaces for a class
of operators}. Ark. Math. 48 (2010), 323--333.
\bibitem{Halmos} {\sc P. R. Halmos}, {\sl Shifts on Hilbert spaces}. J. Reine Angew. Math. 208 (1961), 102--112.
\bibitem{Lax}  {\sc P. D. Lax}, {\sl Translation invariant spaces}. Acta Math. 101 (1959), 163--178.
\bibitem{Ni} {\sc N. K. Nikol' ski\u i},  {\sl Treatise on the shift operator}
Grundlehren Math. Wiss., 273 [Fundamental Principles of Mathematical Sciences]
Springer-Verlag, Berlin, 1986, xii+491 pp.
\bibitem{Ni2}  {\sc N. K. Nikol' ski\u i}, {\sl Operators, functions, and systems: an easy reading. Vol. 1. Hardy, Hankel, and Toeplitz}. Translated from the French by Andreas Hartmann. Mathematical Surveys and Monographs, 92. American Mathematical Society, Providence, RI, 2002.
\bibitem{RR} {\sc H. Radjavi, P. Rosenthal}, {\sl Invariant Subspaces}, Springer, New York, 1973.
\bibitem{Ri} {\sc S. Richter}, {\sl Invariant subspaces of the Dirichlet shift},
J. Reine Angew. Math.  386  (1988), 205--220.
\bibitem{RoRo} {\sc M. Rosenblum, J. Rovnyak}, {\sl Hardy classes and operator theory}. Dover Publications, Inc., Mineola, NY, 1997, xiv+161 pp.
\bibitem{Rov} {\sc J. Rovnyak}, {\sl Ideals of square summable power series}.
Proc. Amer. Math. Soc. 13 (1962), 360--365.
\end{thebibliography}
\end{document}